\newcommand*\rel@kern[1]{\kern#1\dimexpr\macc@kerna}
\newcommand*\widebar[1]{%
  \begingroup
  \def\mathaccent##1##2{%
    \rel@kern{0.8}%
    \overline{\rel@kern{-0.8}\macc@nucleus\rel@kern{0.2}}%
    \rel@kern{-0.2}%
  }%
  \macc@depth\@ne
  \let\math@bgroup\@empty \let\math@egroup\macc@set@skewchar
  \mathsurround\z@ \frozen@everymath{\mathgroup\macc@group\relax}%
  \macc@set@skewchar\relax
  \let\mathaccentV\macc@nested@a
  \macc@nested@a\relax111{#1}%
  \endgroup
}
\newcommand\reallywidehat[1]{%
\savestack{\tmpbox}{\stretchto{%
  \scaleto{%
    \scalerel*[\widthof{\ensuremath{#1}}]{\kern-.6pt\bigwedge\kern-.6pt}%
    {\rule[-\textheight/2]{1ex}{\textheight}}
  }{\textheight}%
}{0.5ex}}%
\stackon[1pt]{#1}{\tmpbox}%
}
\newcommand{\RR}{{\mathbb{R}}}
\newtheorem{Cor}[theorem]{Corollary}
\newtheorem{example}[theorem]{Example}
\newtheorem{remark}[theorem]{Remark}
\newtheorem{num_example}[theorem]{Example}
\title{Numerical methods for large-scale Lyapunov equations with symmetric banded data
\thanks{Version of April 13, 2018. Part of this work was supported by the Indam-GNCS 2017 Project
``Metodi numerici avanzati per equazioni e funzioni di matrici con struttura''. }}
\author{Davide Palitta\thanks{Dipartimento di Matematica, Universit\`a di Bologna, %
Piazza di Porta S. Donato, 5, I-40127 Bologna, Italy ({\tt davide.palitta3@unibo.it}).}  
\and Valeria Simoncini\thanks{Dipartimento di Matematica, Universit\`a di Bologna, 
Piazza di Porta S. Donato, 5, I-40127 Bologna, Italy ({\tt valeria.simoncini@unibo.it}),
and IMATI-CNR, Pavia, Italy.}}
\begin{document}
\bibliographystyle{siam}
\maketitle

\begin{abstract}
The numerical solution of large-scale Lyapunov matrix equations 
with symmetric banded data has so far received little attention in the rich literature
on Lyapunov equations. We aim to contribute to this open problem
by introducing two efficient solution methods, which respectively address the
cases of well conditioned and
ill conditioned coefficient matrices. The proposed approaches
conveniently exploit the possibly hidden structure of the solution matrix so
as to deliver memory and computation saving approximate solutions. 
Numerical experiments are reported to illustrate the potential
of the described methods.
\end{abstract}

\begin{keywords}
{Lyapunov equation, banded matrices, large-scale equations, matrix equations.}
\end{keywords}

\begin{AMS}
65F10, 65F30, 15A06
\end{AMS}


\section{Introduction}
We are interested in the numerical solution of the large-scale Lyapunov equation
\begin{equation}\label{main.eq}
AX+XA=D, 
\end{equation}
where $A\in\RR^{n\times n}$ is symmetric and positive definite (SPD), 
$D\in\mathbb{R}^{n\times n}$ is symmetric, and both are large and banded matrices with 
bandwidth $\beta_A,\beta_D$, respectively. These hypotheses will be assumed throughout
the manuscript.
Lyapunov matrix equations play an important role in signal processing and control theory, 
see, e.g., \cite{Antoulas.05},\cite{Gajic1995},\cite{Datta1994}. However, 
they also arise in different contexts such as in
the discretization of certain elliptic partial differential 
equations, see, e.g., \cite{Palitta2015},
or as intermediate steps in nonlinear equation solvers,
 like the algebraic Riccati equation \cite{Bini2012}.

With the given hypotheses the solution matrix $X$ to (\ref{main.eq}) is symmetric,
and  it is positive (semi-)definite if $D$ is positive (semi-)definite.
In general, the matrix $X$ is dense and for large scale problems it
cannot be explicitly stored. A special situation arises when
$D$ is low rank, that is
$D=BB^T$, $B\in\mathbb{R}^{n\times s}$, $s\ll n$. In this case,
and under certain assumptions on the spectrum of $A$,
$X$ can be well approximated by
a low-rank matrix, that is $X\approx ZZ^T$ with
$Z\in\mathbb{R}^{n\times t}$, $t\ll n$, so that only the tall matrix $Z$ needs
to be stored. 
A rich literature is available for this setting, and successful ``low-rank'' algorithms
for large dimensions have been developed. 
Very diverse algorithms belong to this family 
such as projection methods
\cite{Simoncini2007},\cite{Druskin.Simoncini.11}, low-rank ADI \cite{Benner2009},\cite{Benner2014}, 
sign function methods \cite{Baur2008},\cite{Baur2006}. 
We refer the reader to \cite{Simoncini2016} for a full account of low-rank techniques.

Numerical methods for \eqref{main.eq} with large, banded, and not necessarily low rank $D$ 
have not been given attention so far, in spite of possible occurrence of this setting
in practical applications; see, e.g., \cite{Haber2016},\cite{Palitta2015},\cite{Jonsson.Kagstrom.02b}.

Our aim is to significantly contribute to this open problem
by introducing solution methods for generally banded data. In particular, a new general purpose
algorithm to handle an ill conditioned coefficient banded matrix $A$ is proposed.
 
If $A$ is well conditioned, the entries of $X$  present a decay
in absolute value as they move away from the banded pattern of $D$. Therefore,
 a banded approximation  
$\widehat X\approx X$ can be sought. This idea was exploited
in \cite{Haber2016}, where two algorithms for computing $\widehat X$ were proposed.
We show that if $A$ is well conditioned, a matrix-oriented formulation of the conjugate
gradient method ({\sc cg}) provides a quite satisfactory banded approximation 
at a competitive computational cost.

For general symmetric banded data, the decay pattern of $X$ fades as the conditioning of $A$ worsens,
to the point that for ill-conditioned matrices, no appreciable (exponential) decay can be
detected in $X$.
 Nevertheless, we show that $X$ can be 
split into two terms, which can be well approximated 
 by a banded matrix and by a low-rank matrix, respectively. 
This observation leads to an efficient numerical procedure for solving 
 \eqref{main.eq}  both in terms of CPU time and memory requirements.
 
 In principle one could apply the general purpose greedy 
algorithm proposed by Kressner and Sirkovi\'c in \cite{Kressner2015}.
To be efficient, however, the method in  \cite{Kressner2015} requires that the solution $X$ admits a low-rank 
approximation; unfortunately,  this is not guaranteed in the case of a full-rank $D$.

Moreover, since data in \eqref{main.eq} are banded, they can be viewed as $\mathcal{H}$-matrices and the
algorithm derived in \cite{grasedyck-riccati} could 
be adapted for solving equation \eqref{main.eq}.  In this more 
general setting, the authors of \cite{grasedyck-riccati}
show that the solution $X$ to the Riccati equation (of which the Lyapunov equation is
the linear counterpart) can be well approximated by an $\mathcal{H}$-matrix, and 
 a sign function method equipped with $\mathcal{H}$-matrices arithmetic is proposed for its computation.
The application of this sophisticated procedure to the linear setting with simple banded structure appears
to be unnecessarily cumbersome. On the other hand,
algorithms directly applicable to banded matrices may be appealing
to practitioners, we thus refrain from implementing an ad-hoc version of the algorithm in 
\cite{grasedyck-riccati} for our purposes\footnote{We thank Lars Grasedyck for helpful remarks on the
topic.}.
 
The following is a synopsis of the paper. In section \ref{Well-conditioned} the matrix-oriented {\sc cg} method
is recalled and some of its sparsity pattern properties highlighted, to be used for $A$ well conditioned.
The case of ill-conditioned $A$ is addressed in section \ref{Ill_conditionedA}, while the detailed
procedure is illustrated in sections \ref{Itau}--\ref{IItau}. 
Section \ref{tau} discusses some crucial issues associated with parameter selections of the new method,
together with an automatic strategy for one of them. {The procedures presented 
in section~\ref{Well-conditioned} and section
\ref{Ill_conditionedA} are then generalized to the case of Sylvester equations
with banded data and symmetric positive definite coefficient matrices in section~\ref{Sylvester_eqs}.}
Results on our numerical experience are reported in section~\ref{Numerical_Examples}
 while our conclusions are given in section~\ref{Conclusions}.
 
 Throughout the paper we adopt the following notation. 
 The $(i,j)$-th entry of the matrix $X$ is denoted by $(X)_{i,j}$ while $(x)_k$ is the $k$-th component of the vector $x$.
 Given a symmetric matrix $T$, $\beta_T$ denotes its bandwidth, that is $(T)_{i,j}=0$ for 
 $|i-j|>\beta_T$. For instance, if $T$ is tridiagonal, $\beta_T=1$. 
 If $T$ is symmetric, $\lambda_{\max}(T)$ and $\lambda_{\min}(T)$ 
are its largest and smallest eigenvalues, respectively. The matrix inner product is defined 
as $\langle X,Y\rangle_F:=\mbox{trace}(Y^TX)$ so that
  the induced norm is $\|X\|_F^2=\langle X,X\rangle_F$. The matrix 
norm induced by the Euclidean vector norm is denoted by $\|\cdot\|_2$ while we define
  $\|T\|_{\max}:=\max_{i,j}|(T)_{i,j}|$. Moreover, $\kappa(T)=\|T\|_2\|T^{-1}\|_2$ is the 
 spectral condition number of the invertible matrix $T$.
The Kronecker product is denoted by $\otimes$
while $I_n$ denotes the identity matrix of order $n$, and $e_i$ its $i$-th column. The subscript 
is omitted whenever the dimension of $I$ is clear from the 
context. 
All our numerical experiments were performed in Matlab \cite{matlab7}.


\section{The case of well conditioned $A$}\label{Well-conditioned}
In the case when $A$ is well conditioned, it is possible to fully exploit the
banded structure of the data, and to substantially maintain it in a
suitably constructed approximate solution. To this end, advantage can be
taken of recently developed results on the entry decay
of functions of matrices, see, e.g., \cite{Benzi2007},\cite{Benzi2015},\cite{Canuto2014},\cite{Demko1984},
 by using the Kronecker form of the problem, that is 
\begin{eqnarray}\label{eqn:kron}
{\cal A} x = {\rm vec}(D), \qquad x = {\rm vec}(X),\; \mathcal{A}:=A\otimes I+I\otimes A,
\end{eqnarray}
where ${\rm vec}(X)\in\mathbb{R}^{n^2}$ is the vector obtained by stacking the $n$ columns of $X$ one on top of each other.
Bounds for the entries of the inverse of $\mathcal{A}$ (viewed as a 
banded matrix with bandwidth $n\beta_A$) have been employed to estimate the decay in the entries of the solution 
$X$ to (\ref{main.eq}).
\begin{theorem}[\cite{Haber2016}]\label{decayTH}
 Consider equation (\ref{main.eq}).
Let 
 $$\tau:=\frac{1}{2|\lambda_{\max}(A)|}\max\left\{1,\frac{\left(1+\sqrt{\kappa(A)}\right)^2}{2\kappa(A)}\right\},
 \quad \mbox{and} \quad \rho:=\left(\frac{\sqrt{\kappa(A)}-1}{\sqrt{\kappa(A)}+1}\right)^{\frac{1}{n\beta_A}},$$
 then the solution matrix $X$ satisfies 
 \begin{equation}\label{estdecay}
|(X)_{i,j}|\leq \tau\sum_{k=1}^n\sum_{\ell=1}^{n}\left|(D)_{k,\ell}\right|\rho^{|(\ell-j)n+k-i|}.  
\end{equation}
\end{theorem}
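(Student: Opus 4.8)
The plan is to reduce the statement to a known entrywise decay bound for the inverse of a banded SPD matrix, applied to the Kronecker matrix $\mathcal{A}=A\otimes I+I\otimes A$ of \eqref{eqn:kron}, and then to transport that bound back to the entries of $X$ through the $\vect$ correspondence. Indeed, since $X$ solves \eqref{main.eq}, its vectorization satisfies $x=\mathcal{A}^{-1}\vect(D)$, so a decay estimate on the entries of $\mathcal{A}^{-1}$ translates directly into one on $X$.

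First I would record the structural and spectral properties of $\mathcal{A}$ that feed the decay bound. Since $A$ has bandwidth $\beta_A$, the summand $I\otimes A$ is block diagonal and has bandwidth $\beta_A$, whereas $A\otimes I$ places the nonzero multiples of the off-diagonals of $A$ at index distances that are multiples of $n$; the largest such distance is $n\beta_A$, so $\mathcal{A}$ is banded with bandwidth $n\beta_A$. Moreover, the eigenvalues of $\mathcal{A}$ are exactly the pairwise sums $\lambda_i(A)+\lambda_j(A)$, whence $\lambda_{\min}(\mathcal{A})=2\lambda_{\min}(A)$, $\lambda_{\max}(\mathcal{A})=2\lambda_{\max}(A)$, and, crucially, $\kappa(\mathcal{A})=\kappa(A)$. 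This invariance of the condition number is what makes $\rho$ depend on $\kappa(A)$ and on $n\beta_A$ exactly as stated.

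Next I would invoke the Demko--Moss--Smith type decay estimate for inverses of banded SPD matrices (see, e.g., \cite{Demko1984},\cite{Benzi2007}) for $\mathcal{A}$. This yields $|(\mathcal{A}^{-1})_{p,q}|\le \tau\,\rho^{|p-q|}$ with rate $\rho=q_0^{1/(n\beta_A)}$, where $q_0=(\sqrt{\kappa(\mathcal{A})}-1)/(\sqrt{\kappa(\mathcal{A})}+1)$ and the constant $\tau$ is governed by the uniform error of best polynomial approximation of $1/x$ on the spectral interval $[\lambda_{\min}(\mathcal{A}),\lambda_{\max}(\mathcal{A})]$. The mechanism is standard: a degree-$d$ polynomial of a matrix of bandwidth $w$ has bandwidth $dw$ and therefore vanishes beyond it, so truncating the Chebyshev approximant of $1/x$ controls the far off-diagonal entries of $\mathcal{A}^{-1}$ at the geometric rate $q_0$. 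Substituting $\kappa(\mathcal{A})=\kappa(A)$ and bandwidth $n\beta_A$ reproduces $\rho$, while the approximation error estimate (together with $\lambda_{\min}(\mathcal{A})=2\lambda_{\min}(A)$, $\lambda_{\max}(\mathcal{A})=2\lambda_{\max}(A)$) supplies the explicit $\tau$.

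Finally I would transport the bound to $X$ by bookkeeping with the $\vect$ map. Because $\vect$ stacks columns, the entry $(X)_{i,j}$ sits at position $p=(j-1)n+i$ of $x$ and $(D)_{k,\ell}$ at position $q=(\ell-1)n+k$ of $\vect(D)$, so $(X)_{i,j}=\sum_{k,\ell}(\mathcal{A}^{-1})_{p,q}(D)_{k,\ell}$ with $|p-q|=|(\ell-j)n+k-i|$. Taking absolute values, applying the triangle inequality, and inserting the decay estimate for $(\mathcal{A}^{-1})_{p,q}$ yields \eqref{estdecay}. I expect the only genuine obstacle to be the decay estimate for $\mathcal{A}^{-1}$ with the explicit constant $\tau$ and rate $\rho$; everything else is elementary manipulation of the Kronecker structure and of the index correspondence above. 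Since that estimate is precisely the banded-inverse result available in the quoted literature, the proof amounts to its specialization to $\mathcal{A}$ followed by this index translation.
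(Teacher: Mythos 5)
The paper itself gives no proof of Theorem~\ref{decayTH}: it is quoted from \cite{Haber2016}, and the only hint of a derivation is the sentence preceding it, which describes exactly the mechanism you use (decay bounds for the entries of $\mathcal{A}^{-1}$, with $\mathcal{A}$ viewed as an SPD matrix of bandwidth $n\beta_A$, transported to $X$ via the $\vect$ correspondence). Your structural steps are all correct and are the intended ones: $\mathcal{A}=A\otimes I+I\otimes A$ has bandwidth $n\beta_A$, its eigenvalues are the pairwise sums $\lambda_i(A)+\lambda_j(A)$ so $\kappa(\mathcal{A})=\kappa(A)$, the Demko--Moss--Smith bound gives $|(\mathcal{A}^{-1})_{p,q}|\le C\,\rho^{|p-q|}$ with precisely the stated $\rho$, and the index bookkeeping $|p-q|=|(\ell-j)n+k-i|$ followed by the triangle inequality is exactly right.

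The genuine gap is the constant, which you dispatch with ``supplies the explicit $\tau$.'' It does not. Carrying out the Demko--Moss--Smith computation for $\mathcal{A}$, with spectrum in $[2\lambda_{\min}(A),2\lambda_{\max}(A)]$, gives the prefactor
\begin{equation*}
C=\max\left\{\frac{1}{2\lambda_{\min}(A)},\ \frac{\bigl(1+\sqrt{\kappa(A)}\bigr)^2}{4\lambda_{\max}(A)}\right\}
=\frac{1}{2\lambda_{\min}(A)}\max\left\{1,\frac{\bigl(1+\sqrt{\kappa(A)}\bigr)^2}{2\kappa(A)}\right\},
\end{equation*}
i.e., the stated expression with $\lambda_{\min}(A)$ in place of $|\lambda_{\max}(A)|$. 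Since $A$ is SPD here, these differ by a factor $\kappa(A)$ whenever $\kappa(A)\ge(1+\sqrt{2})^2$ (where the max equals $1$), so your argument proves only the weaker bound with $C$, not the printed one with $\tau$. Indeed the printed bound cannot be recovered by any argument: take $n=2$, $A=\bigl[\begin{smallmatrix}1&\epsilon\\ \epsilon&10\end{smallmatrix}\bigr]$, $D=e_1e_1^T$; as $\epsilon\to0$ one has $(X)_{1,1}\to 1/2$, while the right-hand side of \eqref{estdecay} tends to $\tau=1/20$. The resolution is that in \cite{Haber2016} the matrix $A$ is stable (negative definite), so $|\lambda_{\max}(A)|$ there denotes the eigenvalue of \emph{smallest} magnitude; under this paper's sign change it should be read as $\lambda_{\min}(A)$, and with that reading your proof closes. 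To be complete, you should therefore carry out the constant computation explicitly and note that your $C$ matches the correctly translated $\tau$, rather than asserting that the literature bound reproduces the formula as printed.
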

By exploiting the Kronecker structure of $\mathcal{A}$,
sharper bounds for $\left(\mathcal{A}^{-1}\right)_{i,j}$ can be derived, see, e.g., \cite{Canuto2014},
leading to different, and possibly more accurate, estimates for $|(X)_{i,j}|$. 

\begin{theorem}\label{BoundTH2}
 Consider equation (\ref{main.eq}).
 Define $\lambda_1=\lambda_1(\omega):=\lambda_{\min}(A)+i\omega$, 
 $\lambda_2=\lambda_2(\omega):=\lambda_{\max}(A)+i\omega$, and $R:=\alpha+\sqrt{\alpha^2-1}$
 where $\alpha:=\left(|\lambda_1|+|\lambda_2|\right)/|\lambda_2-\lambda_1|$. 
 Then the solution matrix $X$ satisfies 
 \begin{equation}\label{bound2}
  |(X)_{i,j}|\leq \sum_{k=1}^n\sum_{\ell=1}^{n}\theta_{k,\ell}|(D)_{k,\ell}|,
  \end{equation}
where
\begin{itemize}
 \item If $k\neq i$ and $\ell\neq j$, then
 {\small
 $$
\theta_{k,l}=\frac{64}{2\pi|\lambda_{\max}(A)-\lambda_{\min}(A)|^2}
 \int_{-\infty}^{\infty}\left(\frac{R^2}{(R^2-1)^2}\right)^2
 \left(\frac{1}{R}\right)^{\frac{|k-i|}{\beta_A}+\frac{|\ell-j|}{\beta_A}-2} d\omega.
$$
 }
 \item If either $k=i$ or $\ell=j$, then

 {\small
 $$
\!\!
\theta_{k,l}= \frac{8}{2\pi|\lambda_{\max}(A)-\lambda_{\min}(A)|}
 \int_{-\infty}^{\infty}\!\! \frac{1}{\sqrt{\lambda_{\min}(A)^2+\omega^2}}
 \frac{R^2}{(R^2-1)^2}\left(\frac{1}{R}\right)^{\frac{|k-i|}{\beta_A}+\frac{|\ell-j|}{\beta_A}-1}\!\! d\omega.
$$
 }
 
 \item If both $k=i$ and $\ell=j$, then 
 {\small
$$
\theta_{k,\ell}=\frac{1}{2\lambda_{\min}(A)}.
$$
 }
\end{itemize}
\end{theorem}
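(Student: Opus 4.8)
We need to bound the entries of the solution $X$ to $AX+XA=D$. The starting point, as in the paper's Kronecker reformulation \eqref{eqn:kron}, is to write $X = \mathcal{A}^{-1}\,\mathrm{vec}(D)$ reshaped, so that entrywise
$$
(X)_{i,j} = \sum_{k=1}^n\sum_{\ell=1}^n \left(\mathcal{A}^{-1}\right)_{(j-1)n+i,\;(\ell-1)n+k}\,(D)_{k,\ell}.
$$
Taking absolute values and applying the triangle inequality immediately yields
$$
|(X)_{i,j}| \le \sum_{k=1}^n\sum_{\ell=1}^n \left|\left(\mathcal{A}^{-1}\right)_{(j-1)n+i,\;(\ell-1)n+k}\right|\,|(D)_{k,\ell}|,
$$
so the problem reduces entirely to bounding a single entry of $\mathcal{A}^{-1}$; the desired coefficient $\theta_{k,\ell}$ is exactly such a bound. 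This is the reduction I would carry out first, and it is routine.

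**The representation step.** The core idea is to use an integral representation of $\mathcal{A}^{-1}$ that decouples the two Kronecker factors. Since $\mathcal{A}=A\otimes I+I\otimes A$ with $A$ SPD, I would write $\mathcal{A}^{-1}$ via a Cauchy/contour-integral (or Dunford--Taylor) representation of the resolvent: using $\mathcal{A}^{-1}=\frac{1}{2\pi i}\oint (\zeta I-\mathcal{A})^{-1}\,d\zeta$ and the fact that $(\zeta I - A\otimes I - I\otimes A)^{-1}$ factorizes along the two tensor slots, one can reduce a block entry of $\mathcal{A}^{-1}$ to an integral over $\omega\in\RR$ of a product of two resolvent entries of $A$ of the form $\bigl((\lambda_1(\omega) I - A)^{-1}\bigr)_{k,i}$ and $\bigl((\lambda_2(\omega) I - A)^{-1}\bigr)_{\ell,j}$ (the shifted points $\lambda_1,\lambda_2$ in the statement strongly signal this decoupling, with $\omega$ the imaginary shift along the integration line). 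I expect this manipulation—getting from the two-variable resolvent of $\mathcal{A}$ to a one-parameter integral of a product of single-variable resolvents of $A$—to be the main obstacle, both in choosing the right contour and in justifying convergence.

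**The scalar decay estimate.** For each fixed $\omega$, I would bound the off-diagonal entries of the shifted resolvent $(\lambda I - A)^{-1}$ for a banded SPD matrix using the Demko--Moss--Smith / Chebyshev-approximation machinery cited earlier (\cite{Demko1984},\cite{Benzi2007},\cite{Canuto2014}). The Bernstein-ellipse parameter is exactly the $R=\alpha+\sqrt{\alpha^2-1}$ appearing in the statement, with $\alpha=(|\lambda_1|+|\lambda_2|)/|\lambda_2-\lambda_1|$ encoding how far the shifted spectrum sits from the relevant point; this produces the geometric decay factor $(1/R)^{|k-i|/\beta_A}$ per resolvent (the division by $\beta_A$ accounts for the bandwidth, since the matrix power that first reaches entry $(k,i)$ is $\lceil |k-i|/\beta_A\rceil$). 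Multiplying the two scalar bounds and inserting the prefactors $R^2/(R^2-1)^2$ gives the integrand. The three cases ($k\ne i$ and $\ell\ne j$; exactly one equality; both equalities) arise precisely because the decay bound degenerates on the diagonal: when $k=i$ the off-diagonal estimate is replaced by a direct bound on the diagonal resolvent entry (hence the appearance of $1/\sqrt{\lambda_{\min}(A)^2+\omega^2}$, which is $|\lambda_1(\omega)|^{-1}$), and when both indices coincide the integral collapses to the exact value $\tfrac{1}{2\lambda_{\min}(A)}$.

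**Assembly.** Finally I would integrate the per-$\omega$ bounds over $\omega\in(-\infty,\infty)$, pull the prefactors $1/(2\pi|\lambda_{\max}(A)-\lambda_{\min}(A)|)$ (and its square in the generic case) out front, absorb the combinatorial constants $64$, $8$, $1$ coming from the number of Kronecker cross-terms, and identify the result with $\theta_{k,\ell}$. Substituting back into the triangle-inequality bound from the first step yields \eqref{bound2}. The convergence of the $\omega$-integrals should follow from the quadratic decay of $R^2/(R^2-1)^2$ in $|\omega|$ (as $|\omega|\to\infty$, $\alpha\to\infty$ and $R\to\infty$), so the tails are integrable; verifying this integrability and the exactness of the diagonal case are the two remaining technical checks.
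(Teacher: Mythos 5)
Your outer reduction is exactly the paper's proof: the paper disposes of this theorem in one line, namely that the bound ``directly comes from \cite[Theorem 3.3]{Simoncini2015} summing up on the entries of $D$'' --- i.e., your triangle-inequality step reducing everything to a per-entry bound on $\mathcal{A}^{-1}$, with that per-entry bound outsourced to the citation. Your reconstruction of the inner bound also follows the same mechanism that underlies the cited result: a decoupled $\omega$-integral representation of $\mathcal{A}^{-1}$ combined with Freund-type decay estimates for shifted banded resolvents (Proposition~\ref{Prop:Freund} of the paper), with the three cases arising exactly as you describe. So in spirit this is the paper's route, reconstructed rather than cited.

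There is, however, a concrete error at the crux of your representation step. The correct decoupling is
\begin{equation*}
X=\frac{1}{2\pi}\int_{-\infty}^{\infty}(A+i\omega I)^{-1}\,D\,(A-i\omega I)^{-1}\,d\omega,
\end{equation*}
equivalently $\mathcal{A}^{-1}=\frac{1}{2\pi}\int_{-\infty}^{\infty}(A-i\omega I)^{-1}\otimes(A+i\omega I)^{-1}\,d\omega$: both resolvents are evaluated at the purely imaginary points $\mp i\omega$, which stay uniformly away from the spectrum of the SPD matrix $A$. They are \emph{not} evaluated at $\lambda_1(\omega)$ and $\lambda_2(\omega)$, as in your product $\bigl((\lambda_1(\omega)I-A)^{-1}\bigr)_{k,i}\bigl((\lambda_2(\omega)I-A)^{-1}\bigr)_{\ell,j}$. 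Since $\lambda_1(0)=\lambda_{\min}(A)$ is an eigenvalue of $A$, your first factor is singular at $\omega=0$ and its entries generically blow up like $1/|\omega|$, so the integral you wrote diverges and the step fails as stated. The points $\lambda_1(\omega),\lambda_2(\omega)$ are instead the extreme eigenvalues of the shifted matrix $A+i\omega I$ (the matrix $M=\upsilon_1 I+\upsilon_2 M_0$ of Proposition~\ref{Prop:Freund} with $\upsilon_1=i\omega$, $\upsilon_2=1$, $M_0=A$); they enter only through the decay parameters $\alpha$, $R$ and through the diagonal bound $\|(A+i\omega I)^{-1}\|_2=1/\sqrt{\lambda_{\min}(A)^2+\omega^2}=|\lambda_1(\omega)|^{-1}$ --- which is, in fact, how you use them (correctly) in your scalar-decay paragraph, in contradiction with your own displayed formula. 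Once the representation is fixed, the rest of your plan (Freund bound per off-diagonal factor, norm bound per diagonal factor, integrable tails since $R\sim 2|\omega|/(\lambda_{\max}(A)-\lambda_{\min}(A))$ as $|\omega|\to\infty$) goes through and recovers the stated $\theta_{k,\ell}$.
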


\begin{proof}
The statement directly comes from \cite[Theorem 3.3]{Simoncini2015} summing up on the entries of $D$.
\end{proof}

We emphasize that since $D$ is banded, only few $(D)_{k,\ell}$ are nonzero,
so that only few terms in the summation (\ref{bound2}) are actually computed.

\begin{figure}[htb]
        \centering
          \includegraphics[scale=0.55]{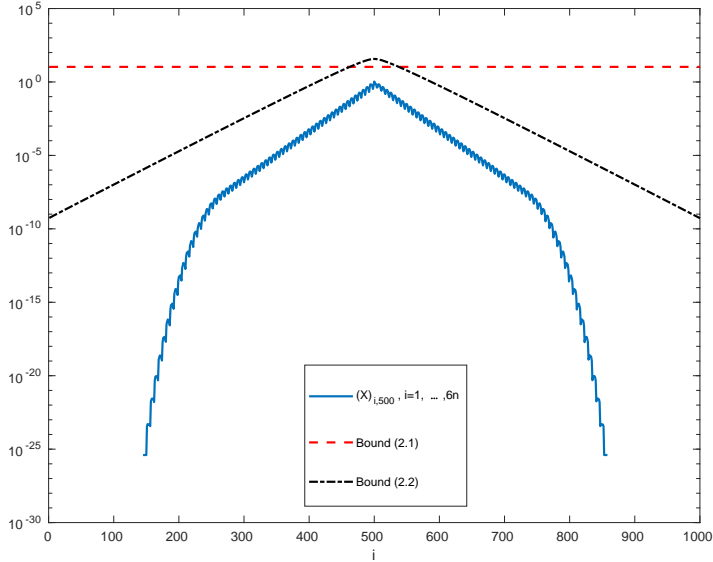}
          \caption{Example~\ref{Ex.1_bounds}. Magnitude of $(X)_{i,500}$, $i=1,\ldots,6n$,
          and its estimates (\ref{estdecay}) and (\ref{bound2}), with logarithmic scale. \label{Ex.1_Fig.1}}
 \end{figure}

\vskip 0.05in
\begin{num_example}\label{Ex.1_bounds}
{\rm
To illustrate the quality of the new bound compared with that in Theorem~\ref{decayTH} we consider
the data generated in Example~\ref{Ex.1} later in this section.
For $6n=1020$ in Figure~\ref{Ex.1_Fig.1} we report the magnitude in logarithmic scale of the entries of
the $500$-th column of the solution $X$, $\log_{10}(|X|_{i,500})$, $i=1,\ldots,6n$ (solid line), together with the 
corresponding
computed bounds in (\ref{estdecay}) (dashed line) and in (\ref{bound2}) (dashed and dotted line).
The new bound correctly captures the decay of the entries, while (\ref{estdecay}) predicts a
misleading almost flat slope. $\quad \square$
}
\end{num_example}

\vskip 0.05in

Since $A$ and ${\cal A}$ are both SPD, 
(\ref{main.eq}) can be solved by {\sc cg} applied to its Kronecker form (\ref{eqn:kron}).
In fact, the 
matrix-oriented {\sc cg} method can be implemented by directly employing $n\times n$ matrices,
in agreement with similar matrix-oriented strategies in the literature;
see, e.g., \cite{Hochbruck1995} for an early presentation. 

An implementation of the procedure is illustrated in Algorithm \ref{CG_matrix}. 
{\footnotesize
\begin{algorithm}
\caption{{\sc cg} for the Lyapunov matrix equation.\label{CG_matrix}}
\SetKwInOut{Input}{input}\SetKwInOut{Output}{output}
\Input{$A\in\mathbb{R}^{n\times n},$ $A$ SPD, 
$D,X_0\in\mathbb{R}^{n\times n}$ with banded storage, $\epsilon_{res}>0$, $m_{\max}$}
\Output{$X_k\in\mathbb{R}^{n\times n}$}
\BlankLine
\nl Set $R_0=D-AX_0-X_0A$, $P_0=R_0$ \\
\For{$k = 1,2,\dots, m_{\max}$}{
\nl $W_k=AP_{k-1}+P_{k-1}A$	\\
\nl $\alpha_k=\frac{\|R_{k-1}\|_F^2}{\langle P_{k-1},W_k\rangle_F}$ \\
\nl $X_k=X_{k-1}+P_{k-1}\alpha_k$\\
\nl $R_k=R_{k-1}-W_k\alpha_k$\\
\nl \If{$\|R_k\|_F/\|R_0\|_F<\epsilon_{res}$}{ 
\nl \textbf{Stop} \\ }
\nl $\beta_k=\frac{\|R_k\|_F^2}{\|R_{k-1}\|_F^2}$\\
\nl $P_k=R_k+P_{k-1}\beta_k$\\
}
\end{algorithm}
}

Several properties of Algorithm \ref{CG_matrix} can be observed. For instance, since $D$ is symmetric, it is easy to show 
that all the iterates, $W_k,X_k,P_k,R_k$, are symmetric for all $k$ if a symmetric $X_0$ is chosen.
This implies that only one matrix-matrix multiplication by $A$ in line 
2 is needed. Indeed, if $S_k:=AP_{k-1}$, then $W_k=AP_{k-1}+P_{k-1}A=AP_{k-1}+(AP_{k-1})^T=S_k+S_k^T$.
Furthermore, only the lower -- or upper -- triangular part of the iterates need 
 to be stored, leading to some gain in 
terms of both memory requirements and number of floating point operations (flops).
Various algebraic simplifications can be
implemented for the matrix inner products and the Frobenius norms in lines 3, 6, 8 as 
well as for the matrix-matrix products in line 2.

We next show that all the matrices involved in Algorithm \ref{CG_matrix} are banded matrices, with
bandwidth linearly depending on $k$, the number of iterations performed so far.
This matrix-oriented procedure is effective in maintaining
the banded structure as long as $k$ is moderate, and this is related to the
conditioning of the coefficient matrix.

 \begin{proposition}\label{THM1}
 If $X_0=0$,  
  all the iterates generated by Algorithm \ref{CG_matrix} are banded matrices and, in particular,
\begin{eqnarray*}
   \beta_{W_k}\leq k\beta_{A}+\beta_{D}, \quad \beta_{X_k}\leq (k-1)\beta_{A}+\beta_{D}, \quad 
   \beta_{R_k}\leq k\beta_{A}+\beta_{D}, \quad  \beta_{P_k}\leq k\beta_{A}+\beta_{D}.
\end{eqnarray*}
 \end{proposition}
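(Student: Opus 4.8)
The plan is to argue by induction on $k$, tracking the four sequences simultaneously and exploiting the order in which they are updated inside each iteration. The whole argument rests on two elementary facts about bandwidth arithmetic, which I would state first. If $M$ and $N$ are banded matrices then their sum obeys $\beta_{M+N}\le\max\{\beta_M,\beta_N\}$, since a zero in both summands at position $(i,j)$ forces a zero in the sum. Moreover, multiplying by the banded matrix $A$ enlarges the band additively: the entry $(AM)_{i,j}=\sum_\ell (A)_{i,\ell}(M)_{\ell,j}$ vanishes whenever $|i-j|>\beta_A+\beta_M$, because a nonzero summand requires $|i-\ell|\le\beta_A$ and $|\ell-j|\le\beta_M$, whence $|i-j|\le\beta_A+\beta_M$; thus $\beta_{AM}\le\beta_A+\beta_M$, and likewise $\beta_{MA}\le\beta_A+\beta_M$, so that $\beta_{AM+MA}\le\beta_A+\beta_M$.

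The base case follows from the choice $X_0=0$: line~1 then yields $R_0=D$ and $P_0=D$, so that $\beta_{R_0}=\beta_{P_0}=\beta_D$. For the inductive step I would assume the stated bounds hold for $R_{k-1}$, $P_{k-1}$, and $X_{k-1}$, and then walk through the four updates in their algorithmic order. From $W_k=AP_{k-1}+P_{k-1}A$ and the product rule, $\beta_{W_k}\le\beta_A+\beta_{P_{k-1}}\le\beta_A+\bigl((k-1)\beta_A+\beta_D\bigr)=k\beta_A+\beta_D$. Since $\alpha_k$ and $\beta_k$ are scalars, the remaining updates $X_k=X_{k-1}+P_{k-1}\alpha_k$, $R_k=R_{k-1}-W_k\alpha_k$, and $P_k=R_k+P_{k-1}\beta_k$ are all of sum type, so the sum rule gives $\beta_{X_k}\le\max\{\beta_{X_{k-1}},\beta_{P_{k-1}}\}=(k-1)\beta_A+\beta_D$, then $\beta_{R_k}\le\max\{\beta_{R_{k-1}},\beta_{W_k}\}=k\beta_A+\beta_D$, and finally $\beta_{P_k}\le\max\{\beta_{R_k},\beta_{P_{k-1}}\}=k\beta_A+\beta_D$. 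This closes the induction, with the first iterate $X_1=P_0\alpha_1$ correctly inheriting $\beta_{X_1}=\beta_D$ from $X_0=0$.

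There is no serious obstacle here; the only thing requiring care is the coupling of the four recurrences \emph{within} a single iteration. The bound for $R_k$ depends on $W_k$, and the bound for $P_k$ depends on $R_k$, so the updates must be processed in the order $W_k\to X_k\to R_k\to P_k$ for the induction to be well founded — each quantity is bounded using only quantities already bounded either at step $k-1$ or earlier within step $k$. It is also worth emphasizing that the hypothesis $X_0=0$ is essential: a general $X_0$ would inject its own bandwidth into $R_0$ and propagate it through every iterate, destroying the clean $k$-linear growth, which is precisely why the assumption is made.
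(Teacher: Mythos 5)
Your proof is correct and follows essentially the same route as the paper: induction on $k$ with the same base case ($R_0=P_0=D$), the same bandwidth rules for sums and products with $A$, and the same processing order $W_k\to X_k\to R_k\to P_k$ within each iteration. The only cosmetic difference is that you prove the product bandwidth rule explicitly, whereas the paper simply recalls it as a known fact.
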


 \begin{proof}
 We first focus on the effects of Algorithm~\ref{CG_matrix} on the bandwidth of the current iterates.
 We recall that if $G,H\in\mathbb{R}^{n\times n}$ are banded matrices with bandwidth $\beta_G,\beta_H$ respectively,
 the matrix $GH$ has bandwidth at most $\beta_G+\beta_H$. 
 The multiplication by $A$ in line 2 of Algorithm \ref{CG_matrix} is the only step that increases
 the iterate bandwidth at iteration $k$, therefore we have
  $\beta_{W_k}\leq \beta_{A}+\beta_{P_{k-1}}$, $\beta_{X_k}\leq \max\{\beta_{X_{k-1}},\beta_{P_{k-1}}\}$,
  $\beta_{R_k}\leq \max\{\beta_{R_{k-1}},\beta_{W_k}\}$ and $\beta_{P_k}\leq \max\{\beta_{R_k},\beta_{P_{k-1}}\}$.
 We now demonstrate the statement by induction on $k$. Since $X_0=0$, $R_0=D$ and 
 $\beta_{R_{0}}=\beta_{P_{0}}=\beta_{D}$. Moreover, for $k=1$,
 $$
\begin{array}{ll}
   \beta_{W_1}\leq \beta_{A}+\beta_{D},
   & \beta_{R_1}\leq \max\{\beta_{R_{0}},\beta_{W_1}\}\leq\beta_{A}+\beta_{D},\\
   \\
   \beta_{X_1}=\beta_{D},
   &
   \beta_{P_1}\leq \max\{\beta_{R_1},\beta_{P_{0}}\}\leq\beta_{A}+\beta_{D}.\\
  \end{array}
$$
 Supposing that the statement holds for $k=j-1>1$, we prove it for $k=j$.
 $$
\begin{array}{l}
   \beta_{W_{j}}\leq \beta_{A}+\beta_{P_{j-1}}\leq\beta_{A}+(j-1)\beta_{A}+\beta_{D}=j\beta_{A}+\beta_{D},\\
  \\
  \beta_{X_j}\leq\max(\beta_{X_{j-1}},\beta_{P_{j-1}})\leq\beta_{P_{j-1}}\leq(j-1)\beta_{A}+\beta_{D},\\
   \\
   \beta_{R_j}\leq\max(\beta_{R_{j-1}},\beta_{W_j})\leq\beta_{W_j}\leq j\beta_{A}+\beta_{D},\\
   \\
   \beta_{P_j}\leq\max(\beta_{R_j},\beta_{P_{j-1}})\leq \beta_{R_j}\leq  j\beta_{A}+\beta_{D}. \qquad \square
  \end{array}
$$
\end{proof}

A similar result can be shown if $X_0$ is a banded matrix.
Theorem~\ref{THM1} implies that after $k$ iterations
all iterates are banded matrices with bandwidth at most $k\beta_A+\beta_D$.
Moreover, only their lower (or upper) triangular parts are stored so that the number 
of nonzero entries of each iterate is at most
$$
\displaystyle{n+\sum_{i=1}^{k\beta_A+\beta_D}(n-i)}=n+(k\beta_A+\beta_D)n-\frac{1}{2}(k\beta_A+\beta_D)(k\beta_A+\beta_D-1)
=\mathcal{O}(n).
$$
Exploiting Theorem~\ref{THM1}, it can be shown that the computational cost of Algorithm \ref{CG_matrix} linearly
scales with the problem size $n$. This is a major saving of the matrix-oriented version of the algorithm,
compared with its standard vector-oriented counterpart with ${\cal A}$, which would require ${\cal O}(n^2)$ operations
per iteration.

\begin{corollary}\label{COR1}
For small values of $k$, the computational cost of the $k$-th iteration of Algorithm \ref{CG_matrix} amounts 
to $\mathcal{O}(n)\,flops$.
\end{corollary}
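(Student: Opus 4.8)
The plan is to walk line by line through Algorithm~\ref{CG_matrix} at iteration $k$ and bound the flops of each operation, using Proposition~\ref{THM1} as the essential structural input: all iterates $W_k, X_k, R_k, P_k$ are banded with bandwidth at most $k\beta_A+\beta_D$. Consequently, once one restricts to (say) the lower triangular part, each iterate has only $\mathcal{O}((k\beta_A+\beta_D)\,n)$ nonzero entries, a count that reduces to $\mathcal{O}(n)$ when $k$ is small and $\beta_A,\beta_D$ are held fixed with respect to $n$. The goal is then to verify that every individual step costs a number of flops proportional to this nonzero count.

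First I would dispatch the operations that merely touch existing banded iterates. The Frobenius inner products and norms in lines~3, 6, and~8 are sums over the nonzero entries of banded matrices, hence cost $\mathcal{O}((k\beta_A+\beta_D)\,n)$ flops each; the scalar-times-matrix updates in lines~4, 5, and~9 act only on the $\mathcal{O}((k\beta_A+\beta_D)\,n)$ stored entries and cost the same. None of these exceeds $\mathcal{O}((k\beta_A+\beta_D)\,n)$.

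The only step requiring more care, and the one I expect to be the main obstacle, is the matrix-matrix multiplication in line~2. Here I would exploit that $A$ has a narrow band $\beta_A$: each entry of $S_k:=AP_{k-1}$ is a sum over at most $\mathcal{O}(\beta_A)$ indices, and $S_k$ has at most $\mathcal{O}((k\beta_A+\beta_D)\,n)$ nonzero entries by Proposition~\ref{THM1}, so forming $S_k$ costs $\mathcal{O}(\beta_A(k\beta_A+\beta_D)\,n)$ flops. Invoking the symmetry observation recorded after Algorithm~\ref{CG_matrix}, namely $W_k=S_k+S_k^T$, avoids a second product and reduces the assembly of $W_k$ to a single pass over the nonzero entries.

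Finally I would collect the bounds: every line of iteration $k$ costs at most $\mathcal{O}(\beta_A(k\beta_A+\beta_D)\,n)$ flops, the matrix product being dominant. Treating $\beta_A,\beta_D$ as fixed and $k$ as small relative to $n$, the factor $\beta_A(k\beta_A+\beta_D)$ is $\mathcal{O}(1)$, and the per-iteration cost collapses to $\mathcal{O}(n)$, as claimed. The delicate point is purely interpretive: ``small $k$'' must be read as keeping $k\beta_A+\beta_D$ bounded away from $n$, since once $k$ grows so that the bandwidth saturates, the iterates become essentially dense and the linear scaling is necessarily lost.
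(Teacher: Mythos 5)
Your proposal is correct and follows essentially the same route as the paper's proof: it invokes Proposition~\ref{THM1} to bound the bandwidths, identifies the product $AP_{k-1}$ in line~2 as the dominant cost of order $\mathcal{O}\left(\beta_A(k\beta_A+\beta_D)n\right)$ (the paper's $\mathcal{O}(8k\beta_A^2 n)$), and charges the inner products, norms, and scalar updates in the remaining lines at cost proportional to the number of nonzero entries. The only cosmetic difference is that you evaluate the Frobenius inner products directly as entrywise sums, whereas the paper phrases them via banded matrix-matrix products; the resulting bound is the same.
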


\begin{proof}
 We first notice that if $G,H\in\mathbb{R}^{n\times n}$ are banded matrices with bandwidth $\beta_G,\beta_H$ respectively,
 the matrix-matrix product $GH$ costs $\mathcal{O}\left(n(2\beta_G+1)(2\beta_H+1)\right)$ flops.
Therefore, the number of operations required by line 2 of Algorithm \ref{CG_matrix} is
$$\mathcal{O}\left(2n(2\beta_A+1)(2\beta_{P_{k+1}}+1)\right)=\mathcal{O}\left(2n(2\beta_A+1)(2(k\beta_{A}+\beta_D)+1)\right)
=\mathcal{O}\left(8k\beta_A^2n\right).
$$
Similarly, matrix-matrix products with banded matrices determine
the matrix inner products $\langle \cdot,\cdot\rangle_F$, and thus the Frobenius norms $\|\cdot\|_F$,
in lines 3 and 8.  
Finally, again the summations in lines 4,5 and 9 require a number of operations of the order of 
the number of nonzero entries
of the matrices involved, that is $\mathcal{O}(n)$. 
\end{proof}

For a given tolerance, we can predict the number of iterations required by {\sc cg} to converge and thus
the bandwidth of the computed numerical solution.
To this end, classical {\sc cg} convergence
results (see, e.g., \cite[Section 13.2.1]{Axelsson1994}) can be applied. 

\begin{theorem}[{\cite[Equation (13.12)\footnote{Since $\mathcal{A}$ is SPD, $\mathcal{K}(S)=\rho=K=1$ in
\cite[Equation (13.12)]{Axelsson1994}.}]{Axelsson1994}}]
Let $err_j:=\|\mbox{vec}(X_*)-\mbox{vec}(X_{j})\|_{\mathcal{A}}$ be the error in the energy norm associated with 
 the exact solution $X_*$ to (\ref{main.eq}). Moreover,  let
$\sigma=\frac{1-\sqrt{\kappa(\mathcal{A})^{-1}}}{1+\sqrt{\kappa(\mathcal{A})^{-1}}}$.
 Then, for a given tolerance $\epsilon_{res}$,
 the matrix $X_{\bar k}$ computed by performing $\bar k$ iterations
 of Algorithm~\ref{CG_matrix}, with
 \begin{equation}\label{bar_k}
 \bar k=\lceil \log\left(\epsilon_{res}^{-1}+\sqrt{\epsilon_{res}^{-2}-1}\right)/\log(\sigma^{-1})\rceil,
 \end{equation}
is such that 
$\frac{err_{\bar k}}{err_0}\leq  \epsilon_{res}$. 
\end{theorem}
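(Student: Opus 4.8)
The plan is to derive the stated iteration count directly from the standard CG error bound in the energy norm, specialized to our symmetric positive definite operator $\mathcal{A}$. The classical result from \cite[Equation (13.12)]{Axelsson1994} gives, for CG applied to a symmetric positive definite system, the error decay

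\begin{equation*}
\frac{err_j}{err_0}\leq 2\left(\frac{\sqrt{\kappa(\mathcal{A})}-1}{\sqrt{\kappa(\mathcal{A})}+1}\right)^{j},
\end{equation*}

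but since in our setting the preconditioner-related quantities satisfy $\mathcal{K}(S)=\rho=K=1$ (see the footnote), the bound sharpens to a Chebyshev-type estimate of the form $err_j/err_0\leq 1/T_j(\cdot)$, where $T_j$ is the $j$-th Chebyshev polynomial. First I would write down this Chebyshev bound explicitly in terms of $\sigma=\frac{1-\sqrt{\kappa(\mathcal{A})^{-1}}}{1+\sqrt{\kappa(\mathcal{A})^{-1}}}$, using the representation $T_j(x)=\tfrac12\big((x+\sqrt{x^2-1})^{j}+(x+\sqrt{x^2-1})^{-j}\big)$ at the relevant argument, so that the denominator becomes a function of $\sigma^{-j}$.

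Next I would impose the requirement $err_{\bar k}/err_0\leq\epsilon_{res}$ and solve for $\bar k$. The key algebraic step is to invert the Chebyshev expression: bounding $1/T_{\bar k}\leq\epsilon_{res}$ amounts to requiring $T_{\bar k}\geq\epsilon_{res}^{-1}$, and evaluating the Chebyshev polynomial at the argument tied to $\sigma$ gives a term of the form $\sigma^{-\bar k}$ (plus a decaying reciprocal term that one drops to obtain an upper bound). Solving $\sigma^{-\bar k}\gtrsim\epsilon_{res}^{-1}+\sqrt{\epsilon_{res}^{-2}-1}$ yields

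\begin{equation*}
\bar k\geq\frac{\log\!\left(\epsilon_{res}^{-1}+\sqrt{\epsilon_{res}^{-2}-1}\right)}{\log(\sigma^{-1})},
\end{equation*}

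and taking the ceiling gives exactly \eqref{bar_k}. The appearance of $\epsilon_{res}^{-1}+\sqrt{\epsilon_{res}^{-2}-1}$ is itself a hyperbolic-function fingerprint (it is $\cosh$ of the inverse), which is precisely what one expects from inverting a Chebyshev polynomial, so this confirms the structure.

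The step I expect to be the main obstacle is matching the precise argument of the Chebyshev polynomial so that the inversion produces $\sigma$ cleanly in the base. One must be careful that $\sigma=\frac{\sqrt{\kappa(\mathcal{A})}-1}{\sqrt{\kappa(\mathcal{A})}+1}$ is exactly the quantity $x-\sqrt{x^2-1}$ for the correct $x=\frac{\kappa(\mathcal{A})+1}{\kappa(\mathcal{A})-1}$, so that $\sigma^{-1}=x+\sqrt{x^2-1}$ is the growth factor governing $T_{\bar k}$; getting this identification right is what ties the abstract Chebyshev bound to the explicit formula \eqref{bar_k}. The remaining manipulations — dropping the subdominant decaying term to obtain a valid upper bound on $1/T_{\bar k}$, and then taking logarithms — are routine, and the ceiling simply guarantees an integer number of iterations suffices.
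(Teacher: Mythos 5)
The paper offers no proof of this statement at all: it is imported verbatim as a cited result, namely Equation (13.12) of Axelsson's book, with the footnote only recording that the constants $\mathcal{K}(S)=\rho=K=1$ in the SPD, unpreconditioned setting. Your reconstruction is the standard derivation underlying that equation, and it is correct: the energy-norm CG bound $err_j/err_0\leq 1/T_j(x)$ with $x=\frac{\kappa(\mathcal{A})+1}{\kappa(\mathcal{A})-1}$, the identification $x+\sqrt{x^2-1}=\sigma^{-1}$ (equivalently $\sigma=x-\sqrt{x^2-1}$), and the inversion of $T_{\bar k}(x)=\tfrac12\left(\sigma^{-\bar k}+\sigma^{\bar k}\right)\geq \epsilon_{res}^{-1}$ give exactly \eqref{bar_k}. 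One small precision point: you do not need to (and should not) ``drop the decaying reciprocal term'' to get the stated threshold --- dropping it would yield the more conservative requirement $\sigma^{-\bar k}\geq 2\epsilon_{res}^{-1}$. The exact threshold $\sigma^{-\bar k}\geq \epsilon_{res}^{-1}+\sqrt{\epsilon_{res}^{-2}-1}$ follows cleanly because $y\mapsto \tfrac12(y+1/y)$ is increasing for $y\geq 1$ and equals $\epsilon_{res}^{-1}$ precisely at $y=\epsilon_{res}^{-1}+\sqrt{\epsilon_{res}^{-2}-1}$ (whose reciprocal is $\epsilon_{res}^{-1}-\sqrt{\epsilon_{res}^{-2}-1}$); this is the $\cosh$/$\mathrm{arccosh}$ identity you alluded to, made exact.
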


\begin{Cor}\label{Cor}
 With the notation above, it holds that 
$\sigma=\frac{1-\sqrt{\kappa(A)^{-1}}}{1+\sqrt{\kappa(A)^{-1}}}$.
 Moreover, for $\bar k$ as in (\ref{bar_k}),  $X_{\bar k}$ is banded 
with bandwidth $\beta_{X_{\bar k}}\leq (\bar k-1)\beta_{A}+\beta_{D}$.
 \end{Cor}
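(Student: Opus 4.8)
The plan is to establish the two claims of Corollary~\ref{Cor} separately, as they are essentially independent. The first claim---the simplification of $\sigma$---reduces to understanding the relationship between $\kappa(\mathcal{A})$ and $\kappa(A)$. First I would recall that $\mathcal{A} = A\otimes I + I\otimes A$, and that by the Kronecker sum spectral theorem the eigenvalues of $\mathcal{A}$ are exactly the pairwise sums $\lambda_p(A)+\lambda_q(A)$ for $p,q=1,\dots,n$. Since $A$ is SPD, all these sums are positive, so $\mathcal{A}$ is SPD as well, with $\lambda_{\max}(\mathcal{A})=2\lambda_{\max}(A)$ and $\lambda_{\min}(\mathcal{A})=2\lambda_{\min}(A)$. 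Hence
\[
\kappa(\mathcal{A})=\frac{\lambda_{\max}(\mathcal{A})}{\lambda_{\min}(\mathcal{A})}=\frac{2\lambda_{\max}(A)}{2\lambda_{\min}(A)}=\kappa(A).
\]
Substituting $\kappa(\mathcal{A})=\kappa(A)$ directly into the definition $\sigma=\frac{1-\sqrt{\kappa(\mathcal{A})^{-1}}}{1+\sqrt{\kappa(\mathcal{A})^{-1}}}$ from the preceding theorem yields the stated formula for $\sigma$ in terms of $\kappa(A)$.

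For the second claim---the bandwidth bound on the computed iterate---the argument is almost immediate from the preceding results. The quantity $\bar k$ defined in~\eqref{bar_k} is precisely the number of {\sc cg} iterations of Algorithm~\ref{CG_matrix} guaranteed to drive the relative error below $\epsilon_{res}$, as asserted by the theorem cited just above. Thus $X_{\bar k}$ is the iterate produced after $\bar k$ steps, and I would simply invoke Proposition~\ref{THM1}, which (under the hypothesis $X_0=0$) gives $\beta_{X_k}\leq(k-1)\beta_A+\beta_D$ for every iteration index $k$. Setting $k=\bar k$ delivers $\beta_{X_{\bar k}}\leq(\bar k-1)\beta_A+\beta_D$, completing the proof.

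Since both halves rest on results already available in the excerpt, there is no genuinely hard step; the only care needed is in the first part. The subtle point is that the bound~\eqref{bar_k} and the value $\sigma$ are stated in terms of the Kronecker operator $\mathcal{A}$, whereas the corollary rephrases everything in terms of the original matrix $A$. The content of the corollary is therefore the translation $\kappa(\mathcal{A})=\kappa(A)$, and the main thing to verify cleanly is that the Kronecker sum structure preserves SPD-ness and collapses the condition number exactly, with no stray factor of two surviving. Once that identity is in hand, both assertions follow by direct substitution and by specializing Proposition~\ref{THM1}, respectively.
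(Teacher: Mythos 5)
Your proposal is correct and follows essentially the same route as the paper: the paper likewise notes that $\mathcal{A}$ is SPD with $\kappa(\mathcal{A})=\frac{2\lambda_{\max}(A)}{2\lambda_{\min}(A)}=\kappa(A)$, and then invokes Proposition~\ref{THM1} with $k=\bar k$ for the bandwidth bound. Your added detail on the Kronecker sum spectral theorem simply makes explicit what the paper states as ``well known.''
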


 \begin{proof}
Both $A$ and $\mathcal{A}$ are SPD. Moreover, it is well known that
$\kappa(\mathcal{A})=\frac{\lambda_{\max}(\mathcal{A})}{\lambda_{\min}(\mathcal{A})}=
\frac{2\lambda_{\max}(A)}{2\lambda_{\min}(A)}=\kappa(A)$.
%
The result follows by recalling from Theorem \ref{THM1} that $X_{\bar k}$ is a banded matrix such that
$\beta_{X_{\bar k}}\leq (\bar k-1)\beta_{A}+\beta_{D}$.
\end{proof}

When $A$ is well conditioned, the simple matrix-oriented {\sc cg} typically outperforms
more sophisticated methods proposed in the very recent literature. A typical situation is
reported in the next example.

\begin{num_example}\label{Ex.1}
{\rm
 We consider an example from \cite{Haber2016}, where  $A=M\otimes I_6+I_n\otimes L\in\mathbb{R}^{6n\times 6n}$,
 $M=\mbox{tridiag}(e,\underline{e},e)\in\mathbb{R}^{n\times n}$, 
 $L=\mbox{tridiag}(e,\underline{a-e},e)\in\mathbb{R}^{6\times 6}$, $e=-0.34$, $a=1.36$.
 The right-hand side is $D=Q\otimes\mathbf{1}\mathbf{1}^T+0.8I_{6n}$ where $\mathbf{1}\in\mathbb{R}^6$
 is the vector of all ones and $Q=\mbox{tridiag}(0.1,\underline{0.2},0.1)\in\mathbb{R}^{n\times n}$;
note the change of sign in $A$ and $D$ compared with \cite{Haber2016}.
%
 Both matrices $A$ and $D$ are block tridiagonal with blocks of size $6$ and $\beta_A=6$, $\beta_D=11$. 
 Thanks to the Kronecker structure of $A$, it is easy to provide an estimate of its condition number which 
 turns out to be independent of $n$ as
 $\lambda_{\max}(A)=\lambda_{\max}(M)+\lambda_{\max}(L)$ and 
 $\lambda_{\min}(A)=\lambda_{\min}(M)+\lambda_{\min}(L)$.
Since $M$ and $L$ are tridiagonal Toeplitz matrices, we can explicitly compute their spectrum:
$\lambda_{\max}(L)=a-e+2|e|\cos(\frac{\pi}{7})$, $\lambda_{\min}(L)=a-e+2|e|\cos(\frac{6}{7}\pi)$,
$\lambda_{\max}(M)=e+2|e|\cos(\frac{\pi}{n+1})$ and $\lambda_{\min}(M)=e+2|e|\cos(\frac{n}{n+1}\pi)$;
see, e.g., \cite{Smith1978}.
 Therefore,
 \begin{eqnarray*}
 \kappa(A)&=&\frac{\lambda_{\max}(A)}{\lambda_{\min}(A)}
 =\frac{a+2|e|\left(\cos(\frac{\pi}{7})+\cos(\frac{\pi}{n+1})\right)}{a+2|e|\left(\cos(\frac{6}{7}\pi)+
 \cos(\frac{n}{n+1}\pi)\right)}=\frac{a+2|e|\left(\cos(\frac{\pi}{7})+\cos(\frac{\pi}{n+1})\right)}{a-
 2|e|\left(\cos(\frac{\pi}{7})+\cos(\frac{\pi}{n+1})\right)}\\
 &\leq&  \frac{a+2|e|\left(\cos(\frac{\pi}{7})+1\right)}{a- 2|e|\left(\cos(\frac{\pi}{7})+1\right)}\leq 40, \quad \mbox{for all }n. 
 \end{eqnarray*}
 The matrix $A$ is thus well-conditioned and Algorithm \ref{CG_matrix} can be employed in the solution process.
 By \eqref{bar_k}, it follows that $\bar k=45$ iterations will be sufficient to obtain
a relative error (in the energy norm) less than $10^{-6}$ for all $n$.
The solution $X_{\bar k}$ will be a banded 
 matrix with bandwidth $\beta_{X_{\bar k}}\leq 44\cdot\beta_A+\beta_D=275$.
 
 We next apply Algorithm \ref{CG_matrix} for different values of $n$ and relative residual tolerance $10^{-6}$,
and we compare the method performance with that of the second procedure described in \cite{Haber2016}.
 This method consists of a gradient projection method applied to $\min_{X}\|D-AX-XA\|_F^2$
 where the initial guess is chosen as a coarse approximation to the integral in \eqref{Xintegral}. We employ 
 the same setting suggested by the authors; see \cite{Haber2016} for details.
 The results are collected in Table~\ref{Ex.1_Tab.1} where the CPU time is expressed in seconds.
In the first instance, Algorithm \ref{CG_matrix} is stopped as soon as the relative residual norm satisfies the
stopping criterion. In the second instance, a fixed number of iterations for Algorithm \ref{CG_matrix}
is used, so as to obtain the same final approximate solution bandwidth  as that of the 
procedure in \cite{Haber2016}.
With this second instance, we are able to directly compare the accuracy and efficiency of
{\sc cg} and of the method in \cite{Haber2016}.
 
%
\begin{table}[!ht]
\centering
{\footnotesize
 \begin{tabular}{r|rrrr|rrrr|rrr}
 $6n$ & \multicolumn{4}{c|}{{\sc cg} (Algorithm \ref{CG_matrix})} & 
\multicolumn{4}{c|}{{\sc cg} (Algorithm \ref{CG_matrix})} &
\multicolumn{3}{c}{Algorithm \cite{Haber2016} } \\
  &  Its. & $\beta_X$  &Time  &  {\bf Res.}  &  Its. & ${\pmb\beta}_X$  &Time  &  Res.  &  
$\beta_X$ &Time &  Res.   \\
  \hline
 $10200$ &  45    & 275 & 17.1 &  8.4e-7 &  8    & 53 & 0.7 &  1.2e-1 & 53 & 123.1 &  5.5e-1\\
 $102000$ &  45   & 275 &170.8   & 8.4e-7 &  8   & 53 &4.6   & 1.2e-1  & 53 &1880.2 & 5.5e-1  \\
 $1020000$ &  45  &275 & 1677.2  & 8.4e-7 &  8  &53 & 56.9  & 1.2e-1 & 53 &23822.9  & 5.5e-1 \\
 \end{tabular}
}
 \caption{Algorithm \ref{CG_matrix} and the second procedure presented in \cite{Haber2016} applied
to Example~\ref{Ex.1}.
 Results are for different values of $6n$. For {\sc cg},
in bold is the quantity used in the stopping criterion. \label{Ex.1_Tab.1}}
 \end{table}

{Because the condition number is bounded independently of $n$, the number of {\sc cg} iterations is
also bounded by a constant independent of $n$}; this is clearly shown in the table. Therefore the
total CPU time to satisfy a fixed convergence criterion scales linearly with $n$.
The results illustrated in Table~\ref{Ex.1_Tab.1} show that Algorithm \ref{CG_matrix} is very effective,
 in terms of CPU time, while it always reaches the desired residual norm, when this is used as
stopping criterion. This is not the case for the
algorithm in \cite{Haber2016}, which would probably require a finer parameter tuning to be able
to meet all stopping criteria. 
If the final bandwidth is the stopping criterion,
the obtained accuracy is comparable with the results of the algorithm in \cite{Haber2016}, however 
{\sc cg} is many orders of magnitude faster.

We next compare the memory-saving solution $X_k$ computed by the {\sc cg} algorithm to the dense 
solution $X$ obtained with 
the Bartels-Stewart method \cite{Bartels1972} implemented in Matlab as the function {\tt lyap}.
To this end, we consider a small problem, $6n=1020$ and set $\epsilon_{res}=10^{-6}$.
{\sc cg} converges in $k=45$ iterations providing a solution $X_k$ 
such that $\beta_{X_k}=275$.
In Figure~\ref{contour} we plot in logarithmic scale the relative magnitude of the entries of 
$X_k-\widebar X$ where $\widebar X$ is obtained from $X$ by retaining only its first 275 (upper and lower)
diagonals.

\begin{figure}[htb]
        \centering
          \includegraphics[scale=0.7]{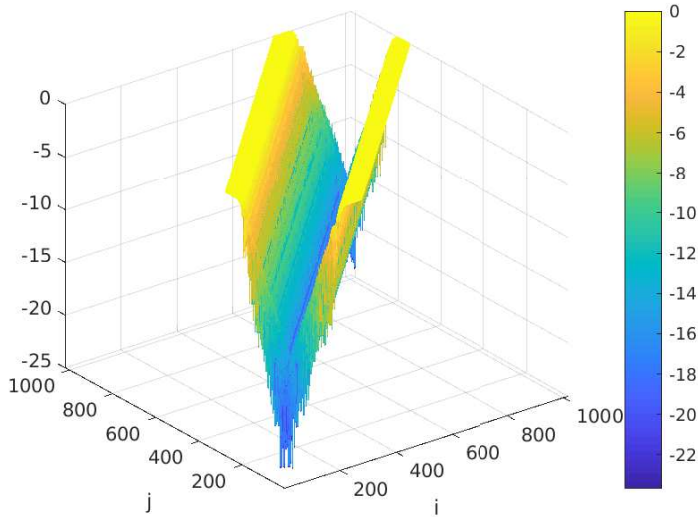}
          \caption{Decay pattern of the entrywise relative error of the {\sc cg} approximate
solution matrix (Logarithmic scale). \label{contour}}
 \end{figure}

As expected, the error in the approximate solution $X_k$ is concentrated in the entries of the most external 
diagonals.  
Indeed, due to the decay pattern of $X$, the largest entries of $X$
are gathered near the main diagonal, and these must be well approximated to reach the prescribed 
accuracy. 
Intuitively, the corresponding entries of $X_k$ have been 
refined as the iterations proceed so that they do not contribute to the entry-wise error. 
}
$\qquad \square$

\end{num_example}

\vskip 0.05in

We recommend using the matrix-oriented {\sc cg} method for well-conditioned $A$, while
for ill-conditioned problems we present a new method in the next section.
Nonetheless, in case of moderately ill-conditioned $A$, one may still want to employ {\sc cg} and
apply a preconditioning
operator $\mathcal{P}$ to further reduce the number of {\sc cg} iterations. 
%
However, the derivation of such a 
$\mathcal{P}$ is not straightforward in our context.
In addition to reducing the iteration count at low cost, the application of $\mathcal{P}$
should preserve the banded structure of the subsequent iterates.
This is surely an interesting problem to explore, however it goes beyond the
scope of this work.

The situation changes significantly if $A$ is ill conditioned, since a larger number of
iterations will be required to determine a sufficiently good approximation. 
This difficulty is not a peculiarity of the method, but rather
it reflects the fact that the exact solution $X$ cannot be well represented by a
banded matrix. Therefore, any acceleration strategy to reduce the {\sc cg} iteration count will
necessarily end up constructing a denser approximation. 
In this case, a different strategy needs to
be devised, and this is discussed in the next section.

 \section{A new method for ill-conditioned $A$}\label{Ill_conditionedA}
If $A$ is ill-conditioned, the entries of the solution $X$ to (\ref{main.eq}) do not have, in general,
a fast decay away from the diagonal, so that a banded approximation is usually not sufficiently accurate.
By using the following closed form for the matrix $X$ (see, e.g., \cite{Lancaster1970})
\begin{equation}\label{Xintegral}
 X=\int_{0}^{+\infty}e^{-tA}De^{-tA}dt,
\end{equation}
we next derive a splitting of the matrix $X$ that leads to a memory saving
approximation.
 The simple proof is reported for sake of completeness, as the result without
proof is stated by Kailath as an exercise\footnote{We thank a referee for citing an article
pointing to Kailath's book for this result.} in \cite[Exercise 2.6-1]{Kailath1980}.

\begin{theorem}\label{th:splitting}
Let $X(\tau) = \int_{0}^{\tau}e^{-tA}De^{-tA}dt$, for $\tau >0$, 
so that $X \equiv X(+\infty)$.
For $\tau>0$ the matrix $X$ in (\ref{Xintegral})  can be written as
 \begin{equation}\label{eqn:splitting}
  X= X(\tau) + e^{-\tau A} X e^{-\tau A} .
 \end{equation}
\end{theorem}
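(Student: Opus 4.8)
The plan is to split the infinite integral defining $X$ at the point $\tau$ and recognize the tail integral as a scaled copy of $X$ itself. Starting from \eqref{Xintegral}, I would write
$$
X = \int_0^{+\infty} e^{-tA} D e^{-tA}\,dt = \int_0^{\tau} e^{-tA} D e^{-tA}\,dt + \int_{\tau}^{+\infty} e^{-tA} D e^{-tA}\,dt = X(\tau) + \int_{\tau}^{+\infty} e^{-tA} D e^{-tA}\,dt,
$$
so the entire task reduces to showing that the tail integral equals $e^{-\tau A} X e^{-\tau A}$.

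For the tail term I would substitute $t = s + \tau$ (so $dt = ds$ and the limits $t\in[\tau,+\infty)$ become $s\in[0,+\infty)$), giving
$$
\int_{\tau}^{+\infty} e^{-tA} D e^{-tA}\,dt = \int_{0}^{+\infty} e^{-(s+\tau)A} D e^{-(s+\tau)A}\,ds.
$$
The key algebraic fact is that since $A$ is a fixed symmetric matrix, $e^{-(s+\tau)A} = e^{-\tau A} e^{-sA} = e^{-sA} e^{-\tau A}$ (the matrix exponentials commute because they are functions of the same matrix). I would use this to factor the constant-in-$s$ exponential $e^{-\tau A}$ out of the integrand on both sides:
$$
e^{-(s+\tau)A} D e^{-(s+\tau)A} = e^{-\tau A}\left( e^{-sA} D e^{-sA}\right) e^{-\tau A}.
$$
Because $e^{-\tau A}$ does not depend on the integration variable $s$, it can be pulled outside the integral sign on both the left and the right, yielding
$$
\int_{0}^{+\infty} e^{-(s+\tau)A} D e^{-(s+\tau)A}\,ds = e^{-\tau A}\left(\int_{0}^{+\infty} e^{-sA} D e^{-sA}\,ds\right) e^{-\tau A} = e^{-\tau A} X e^{-\tau A},
$$
where the last equality is again \eqref{Xintegral} with $s$ as the dummy variable. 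Combining this with the first display gives \eqref{eqn:splitting}.

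The only genuine subtlety — and the step I would be most careful about — is the convergence of the improper integrals, which justifies both the splitting at $\tau$ and the interchange of the constant factors $e^{-\tau A}$ with the integral. Since $A$ is SPD with $\lambda_{\min}(A) > 0$, we have $\|e^{-tA}\|_2 \leq e^{-t\lambda_{\min}(A)}$, so the integrand decays like $e^{-2t\lambda_{\min}(A)}$ in norm and every integral above is absolutely convergent; this makes all the manipulations (substitution, factoring constants through the integral) rigorous rather than merely formal. Everything else is the routine commutativity of functions of a single matrix, so I expect no real obstacle beyond recording this integrability remark.
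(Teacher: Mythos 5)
Your proof is correct and follows essentially the same route as the paper: split the integral at $\tau$, change variables $t = s+\tau$ in the tail, and use the semigroup property $e^{-(s+\tau)A}=e^{-\tau A}e^{-sA}$ to factor out $e^{-\tau A}$ on both sides, recovering $e^{-\tau A}Xe^{-\tau A}$. Your extra remark on absolute convergence (using $\|e^{-tA}\|_2\le e^{-t\lambda_{\min}(A)}$ with $A$ SPD) is a sound addition that the paper leaves implicit.
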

\begin{proof}
We can split $X$ as  $X = \int_0^\tau e^{-tA}De^{-tA}dt + \int_\tau^{+\infty}e^{-tA}De^{-tA}dt$, 
where the first term is $X(\tau)$.
Performing the change of variable $t=s+\tau$ it holds
\begin{eqnarray*}
\int_\tau^{+\infty}e^{-tA}De^{-tA}dt
&=&\int_0^{+\infty}e^{-(s+\tau)A}De^{-(s+\tau)A}ds\\
&=&e^{-\tau A}\int_0^{+\infty}e^{-sA}De^{-sA}dse^{-\tau A} =e^{-\tau A}Xe^{-\tau A}. \qquad \square
\end{eqnarray*}
\end{proof}

The splitting in (\ref{eqn:splitting}) emphasizes two terms in the solution matrix $X$. If
$\tau$ is sufficiently large and the eigenvalues of $A$ present a global decay,
 the second term is clearly numerically low rank, since $e^{-\tau A}$ is 
numerically low rank.
Depending on the magnitude of $\tau A$, the following Theorem~\ref{Th:decay_exp} proved 
in \cite{Benzi2015} ensures
that the first term is banded. As a result, Theorem~\ref{th:splitting} provides a splitting
of $X$ between its banded and numerically low rank parts. Our new method aims at approximating
these two terms separately, so as to limit memory consumption.
\vskip 0.1in
\begin{theorem}[\cite{Benzi2015}]\label{Th:decay_exp}
 Let $M$ be Hermitian positive semidefinite with eigenvalues in the interval $[0,4\rho]$. Assume in addition that $M$ is 
 $\beta_M$-banded. For $k\neq \ell$, let $\xi=\lceil |k-\ell|/\beta_M\rceil$, then
 \begin{itemize}
  \item[(i)] For $\rho t\geq 1$ and $\sqrt{4\rho t}\leq \xi\leq 2\rho t$,
  $|(e^{-tM})_{k,\ell}|\leq 10\,e^{-\frac{\xi^2}{5\rho t}};$
  \item[(ii)] For $\xi\geq 2\rho t$,
  $|(e^{-tM})_{k,\ell}|\leq 10\frac{e^{-\rho t}}{\rho t}\left(\frac{e\rho t}{\xi}\right)^\xi.$
 \end{itemize}
\end{theorem}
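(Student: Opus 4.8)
The plan is to reduce the entrywise bound to a scalar polynomial approximation problem, exploiting the fact that powers of a banded matrix remain banded. Since $M$ is $\beta_M$-banded, the matrix $M^p$ has bandwidth at most $p\beta_M$, so $(M^p)_{k,\ell}=0$ whenever $p\beta_M<|k-\ell|$, that is, whenever $p<\xi=\lceil|k-\ell|/\beta_M\rceil$. Consequently, for any polynomial $q$ of degree at most $\xi-1$ one has $(q(M))_{k,\ell}=0$, and therefore $(e^{-tM})_{k,\ell}=(e^{-tM}-q(M))_{k,\ell}$. Since $M$ is Hermitian, the $(k,\ell)$ entry of any matrix is bounded in modulus by its spectral norm, and the spectral norm of $e^{-tM}-q(M)$ equals $\max_{\lambda\in\mathrm{spec}(M)}|e^{-t\lambda}-q(\lambda)|$. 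Bounding the spectrum by $[0,4\rho]$, I obtain
\[
|(e^{-tM})_{k,\ell}|\le \min_{\deg q\le \xi-1}\ \max_{\lambda\in[0,4\rho]}|e^{-t\lambda}-q(\lambda)|.
\]
Thus everything reduces to estimating how well $e^{-t\lambda}$ can be approximated on $[0,4\rho]$ by a polynomial of degree below $\xi$.

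Next I would make this approximation explicit through a Chebyshev expansion. Mapping $[0,4\rho]$ onto $[-1,1]$ via $\lambda=2\rho(1-y)$, $y=\cos\theta$, gives $e^{-t\lambda}=e^{-2\rho t}e^{2\rho t\,y}$, and the classical generating function for the modified Bessel functions, $e^{a\cos\theta}=I_0(a)+2\sum_{m\ge1}I_m(a)\cos(m\theta)$ with $a=2\rho t$, yields the Chebyshev expansion $e^{-t\lambda}=e^{-2\rho t}\bigl(I_0(a)+2\sum_{m\ge1}I_m(a)T_m(y)\bigr)$. Choosing $q$ to be the truncation of this series at degree $\xi-1$ and using $|T_m(y)|\le1$ on $[-1,1]$ bounds the approximation error by the Bessel tail,
\[
|(e^{-tM})_{k,\ell}|\le 2e^{-2\rho t}\sum_{m\ge\xi}I_m(2\rho t).
\]

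Finally I would estimate this tail in the two stated regimes using standard inequalities for $I_m(a)$. The split at $\xi=2\rho t=a$ mirrors the transition in the behaviour of $I_m(a)$: for $m\lesssim a$ the Bessel function decays in a Gaussian fashion, with $e^{-a}I_m(a)$ controlled by $e^{-m^2/(2a)}$-type estimates, which after summing the tail produces the bound $10\,e^{-\xi^2/(5\rho t)}$ of case (i) (the constant $5$ rather than $4$ absorbing the summation and the prefactors cleanly); for $m\ge a$ one instead uses the factorial bound coming from the series $I_m(a)=\sum_{j\ge0}(a/2)^{2j+m}/(j!(m+j)!)$, giving super-exponential decay of the form $(e\rho t/\xi)^\xi$ and hence case (ii). The main obstacle is precisely this last step: converting the asymptotic Gaussian and factorial descriptions of the modified Bessel functions into fully rigorous, summable bounds with the clean explicit constants $10$, $5$, and $e$ appearing in the statement. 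Keeping track of the geometric-type decay of consecutive tail terms, so that the infinite sum is dominated by a constant multiple of its first term, is the delicate part; the reduction carried out in the first two paragraphs is essentially mechanical.
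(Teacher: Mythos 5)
The paper does not prove this theorem at all: it is imported verbatim, constants included, from \cite{Benzi2015} (the text explicitly says ``proved in \cite{Benzi2015}''), so there is no internal proof to compare with; the only meaningful comparison is with the argument in that reference, which rests on the Chebyshev--Bessel analysis of Hochbruck and Lubich for the matrix exponential. Your first two paragraphs reproduce that argument's reduction correctly: $(M^p)_{k,\ell}=0$ for $p<\xi$ gives $|(e^{-tM})_{k,\ell}|\le\min_{\deg q\le\xi-1}\max_{\lambda\in[0,4\rho]}|e^{-t\lambda}-q(\lambda)|$, and truncating the Chebyshev expansion with modified Bessel coefficients bounds this by the tail $2e^{-2\rho t}\sum_{m\ge\xi}I_m(2\rho t)$. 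Up to this point the proposal is sound and is exactly the route taken in the cited source.

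The gap is that the theorem's two bounds, which are its entire quantitative content, are never derived: everything after the tail expression is a statement of intent. Moreover, the one concrete mechanism you offer for finishing --- that the ``geometric-type decay of consecutive tail terms'' makes the sum dominated by a constant multiple of its first term --- fails in regime (i). There $e^{-2\rho t}I_m(2\rho t)\approx(4\pi\rho t)^{-1/2}e^{-m^2/(4\rho t)}$, so the ratio of consecutive terms is roughly $e^{-(2m+1)/(4\rho t)}$, which tends to $1$ when $\xi\ll 2\rho t$ (e.g.\ $\xi\approx\sqrt{4\rho t}$ with $\rho t$ large); the tail must instead be compared with a Gaussian integral, giving $\sum_{m\ge\xi}e^{-m^2/(4\rho t)}\le\left(1+\tfrac{2\rho t}{\xi}\right)e^{-\xi^2/(4\rho t)}$, and it is precisely the hypotheses $\sqrt{4\rho t}\le\xi\le 2\rho t$ and $\rho t\ge1$ that allow this prefactor, the rigorous (non-asymptotic) Bessel upper bounds, and the resulting loss in the exponent to be absorbed into the constants $10$ and $5$. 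Regime (ii) does admit a geometric-ratio argument (for $m\ge 2\rho t$ the term ratio is below about $1/2$), but it still requires an explicit inequality such as $I_m(a)\le\frac{(a/2)^m}{m!}\,e^{a^2/(4(m+1))}$ together with Stirling's bound to produce $10\,\frac{e^{-\rho t}}{\rho t}\left(\frac{e\rho t}{\xi}\right)^{\xi}$. As it stands, the proposal establishes the (mechanical) reduction but not the theorem.
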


\vskip 0.1in
In our setting, Theorem~\ref{Th:decay_exp} can be applied to $e^{-t(A-\lambda_{min}I)}$
by appropriately scaling the original matrix $e^{-tA}$.
For small $t$, Theorem \ref{Th:decay_exp} ensures that $e^{-tA}$ has small components
away from the diagonal so that it can be well approximated by a banded matrix, $\reallywidehat{e^{-tA}} \approx e^{-tA}$; 
the product $\reallywidehat{e^{-tA}} D \reallywidehat{e^{-tA}}$ is still banded.

With these considerations in mind, we are going to approximate $X$ by
estimating the two quantities $X(\tau)$,  $e^{-\tau A} X e^{-\tau A}$ in (\ref{eqn:splitting}),
for a suitable $\tau>0$, that is
$$
  X= X(\tau) + e^{-\tau A} X e^{-\tau A} \approx X_B + X_L,
$$
where the banded matrix $X_B$ approximates the fast decaying portion $X(\tau)$, while
$X_L$ approximates the numerically low rank part $e^{-\tau A} X e^{-\tau A}$.

\subsection{Approximating $X(\tau)$ by a banded matrix}\label{Itau}
The approximation of the first term by a banded matrix is obtained with the following steps:

i) We first replace the integral in $X(\tau)$ by an adaptive quadrature formula;

ii) We approximate the two exponential matrix functions by rational counterparts,
using a partial fraction expansion;

iii) We truncate the elementary terms in the partial fraction expansion to banded form.

The a-priori accuracy of the first two steps can be estimated by using well established
results in the literature applied to the eigendecomposition of $A$. 
In the third step, terms of the type $(t_i A - \xi_jI)^{-1}$ are dense, however recent theoretical
results ensure that they can be approximated with banded matrices by truncation.

We start with step (i), that is
\begin{equation}\label{gauss_lobatto}
 X(\tau)=\int_{0}^{\tau}e^{-tA}De^{-tA}dt\approx \frac{\tau}{2}\sum_{i=1}^\ell\omega_i e^{-t_iA}De^{-t_iA},
\end{equation}
where $t_i=\frac{\tau}{2}x_i+\frac{\tau}{2}$, while $x_i,\omega_i$ are respectively the 
nodes and weights of the formula; in our experiments we considered
a matrix-oriented version of the adaptive
Gauss-Lobatto quadrature in \cite[Section~4.5]{Gander2000} with given tolerance $\epsilon_{quad}$.

As for step (ii), rational functions provide very accurate approximations to the matrix exponential
$e^{-A} \approx \mathcal{R}_\nu(-A)$. See, e.g., \cite{Baker1996},\cite{Carpenter1984},\cite{Trefethen2006}. 
In our setting rational Chebyshev functions in $\RR^+$ appear to be appropriate. They
admit the following partial fraction expansion
\begin{equation}\label{Cheb_ratfun}
\mathcal{R}_\nu(A)=\sum_{j=1}^\nu\theta_j(A-\xi_jI)^{-1}, 
\end{equation}
where $\theta_j,\xi_j\in\mathbb{C}$ are its weights and (distinct) poles, respectively. 
For $A$ real, the poles $\xi_j$ are complex conjugate, yielding  the simplified form 
\begin{equation}\label{reducing_nu}
 \mathcal{R}_\nu(A)=\sum_{\substack{{j=1,}\\{j \; odd}}}^{\nu-1}2\mbox{Re}\left(\theta_j\left(t_iA-\xi_jI\right)^{-1}\right)+
 \theta_\nu\left(t_iA-\xi_\nu I\right)^{-1}, 
\end{equation}
where $\xi_\nu$ is the real pole of $\mathcal{R}_\nu$ if $\nu$ is odd. 
The formula is well defined. Indeed,
since $A$ is symmetric, the matrix $t_iA-\xi_jI$ is invertible if $\xi_j$ has nonzero imaginary part. In case of a real 
$\xi_\nu$, a direct computation shows that $\xi_\nu<0$ for $\nu\in\{1,\ldots,13\}$\footnote{The 
computation of $\xi_j$, $\theta_j$ can be carried out by using the polynomial coefficients listed
in \cite[Tab. III]{Cody1969} for $\nu=1,\ldots,14$. See also section~\ref{Implementation details}.}, $\nu$ odd,
so that $t_iA-\xi_\nu I$ is nonsingular as well. We refer the reader to 
section~\ref{Implementation details} for details on the 
computation of the weights and poles of the rational Chebyshev function \eqref{Cheb_ratfun}.
The number $\nu$ of terms in (\ref{Cheb_ratfun}) is closely related to the 
accuracy of the computed approximation. Indeed, it holds
(see, e.g., \cite{Carpenter1984})
$$ 
\sup_{\lambda\geq 0}|e^{-\lambda}-\mathcal{R}_\nu(\lambda)|\approx 10^{-\nu};
$$
a similar estimate holds for $\|e^{-A}-\mathcal{R}_\nu(A)\|_2$ for $A$ SPD.
Indeed, if $A=Q\Lambda Q^T$, $\Lambda=\mbox{diag}(\lambda_1,\ldots,\lambda_n),$
denotes the eigedecomposition of $A$, it holds
\begin{align*}
 \|e^{-A}-\mathcal{R}_\nu(A)\|_2&=\|e^{-\Lambda}-\mathcal{R}_\nu(\Lambda)\|_2=
\max_{i=1,\ldots,n}|e^{-\lambda_i}-\mathcal{R}_\nu(\lambda_i)|.
\end{align*}
Few terms are thus needed to obtain a quite accurate approximation, for our purposes. 

The rational function approximation
(\ref{reducing_nu}) requires the computation of
several inverses of the form $(t_iA-\xi_jI)^{-1}$ for all $i=1,\ldots,\ell$, $j=1,\ldots,\nu$, which are, in general,
dense. This leads to the third approximation step above, that is
a banded approximation $\reallywidehat{(t_iA-\xi_jI)^{-1}}\approx (t_iA-\xi_jI)^{-1}$ with
bandwidth much smaller than $n$.
The quality of this approximation is ensured by the following
result, which takes great advantage of the fact that the shifts $\xi_j$s are complex.
\begin{proposition}[\cite{Freund1989}]\label{Prop:Freund}
 Let $M=\upsilon_1 I+\upsilon_2 M_0$ be 
$\beta_M$-banded with $M_0$ Hermitian and $\upsilon_1,\upsilon_2\in\mathbb{C}$. 
 Define 
 $a:=(\lambda_{\max}(M)+\lambda_{\min}(M))/(\lambda_{\max}(M)-\lambda_{\min}(M))$ and
 $R:=\alpha+\sqrt{\alpha^2-1}$ with $\alpha=(|\lambda_{\max}(M)|+|\lambda_{\min}(M)|)/|\lambda_{\max}(M)-\lambda_{\min}(M)|$.
 Then,
\begin{equation}\label{estimate2}
\left|\left(M^{-1}\right)_{p,q}\right|\leq \frac{2R}{|\lambda_{\max}(M)-\lambda_{\min}(M)|}B(a)
\left(\frac{1}{R}\right)^{\frac{|p-q|}{\beta_M}},\quad p\neq q, 
\end{equation}
where, writing $a=\zeta_R\cos(\psi)+i\eta_R\sin(\psi)$,
$$ 
B(a):=\frac{R}{\eta_R\sqrt{\zeta_R^2-\cos^2(\psi)}(\zeta_R+\sqrt{\zeta_R^2-\cos^2(\psi)})},
$$
with $\zeta_R=(R+1/R)/2$ and $\eta_R=(R-1/R)/2$.
 \end{proposition}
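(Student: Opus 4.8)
The plan is to prove the off-diagonal entry bound (\ref{estimate2}) for the inverse of a banded matrix $M$ whose Hermitian part is scaled and shifted by complex constants. The key idea is a standard one in the literature on decay of matrix functions: bound the entries of $M^{-1}$ by approximating $M^{-1}$ in the algebra of polynomials in $M$, and control the approximation error by a Chebyshev-type estimate on a region of the complex plane containing the spectrum of $M$. Because $M_0$ is Hermitian, the spectrum of $M=\upsilon_1 I+\upsilon_2 M_0$ lies on a line segment in $\mathbb{C}$ joining $\lambda_{\min}(M)$ and $\lambda_{\max}(M)$, and the quantities $a$, $R$, $\alpha$, $\zeta_R$, $\eta_R$ are precisely the parameters describing a Bernstein ellipse adapted to that segment.

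First I would reduce the problem to approximating the scalar function $f(z)=1/z$ on the spectral segment by polynomials. The mechanism is: if $p$ is any polynomial of degree $d$, then $M^{-1}-p(M)$ has entries bounded by $\|M^{-1}-p(M)\|_2$, and since $p(M)$ is banded with bandwidth at most $d\,\beta_M$, the entry $(M^{-1})_{p,q}$ with $|p-q|>d\,\beta_M$ coincides with the error entry and is therefore bounded by the approximation error. Optimizing over the degree $d$ as a function of $|p-q|$ yields geometric decay governed by $R$, the sum of the semi-axes of the Bernstein ellipse. Concretely, I would invoke the best-approximation estimate for $1/z$ on an ellipse with foci at the segment endpoints; the constant $B(a)$ arises from evaluating the residue/partial-fraction representation of $1/z$ in the parametrization $z=\zeta_R\cos\psi+i\eta_R\sin\psi$, which explains the appearance of $\sqrt{\zeta_R^2-\cos^2(\psi)}$ in the denominator.

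Since the statement is attributed to Freund \cite{Freund1989}, the cleanest route is to cite the relevant theorem there for the real-symmetric (or Hermitian) banded case and then transfer it to the shifted–scaled matrix $M=\upsilon_1 I+\upsilon_2 M_0$. The transfer step is the essential reduction: I would observe that the eigenvalues of $M$ are affine images $\upsilon_1+\upsilon_2\mu$ of the real eigenvalues $\mu$ of $M_0$, so they are collinear in $\mathbb{C}$; the inverse $M^{-1}$ is then a function of $M_0$ evaluated on a segment, and the whole Chebyshev/Bernstein machinery applies verbatim once the segment's endpoints $\lambda_{\min}(M),\lambda_{\max}(M)$ and the associated ellipse parameters $a,R,\zeta_R,\eta_R$ are used. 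The bandwidth is unchanged under the affine map, which is why $\beta_M$ enters the exponent as $|p-q|/\beta_M$.

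The main obstacle, and the place where care is genuinely required, is keeping track of the complex geometry. Unlike the classical real-symmetric case where the spectrum sits on a real interval and $a$ is a real parameter outside $[-1,1]$, here $a=\zeta_R\cos(\psi)+i\eta_R\sin(\psi)$ is genuinely complex, and one must verify that the Bernstein ellipse with parameter $R$ does enclose the spectral segment and avoid the origin (so that $1/z$ is analytic on and inside the ellipse). This is exactly the content of the hypothesis that $M$ is invertible with $0$ outside the segment, and it is what makes $B(a)$ finite; establishing that $\zeta_R^2-\cos^2(\psi)>0$ and that the ellipse does not pass through $0$ is the delicate bookkeeping. Once this geometric admissibility is confirmed, the decay estimate (\ref{estimate2}) follows directly from the cited polynomial-approximation bound, and I would simply refer to \cite{Freund1989} for the explicit form of the constant rather than rederiving the residue computation.
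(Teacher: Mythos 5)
The paper offers no proof of this proposition at all---it is quoted verbatim from Freund \cite{Freund1989}---and your proposal, which ultimately defers to that reference for the explicit constant, is therefore consistent with the paper's approach. Your sketch of the underlying mechanism (polynomial approximation of the inverse, the bandwidth-of-$p(M)$ argument, the affine transfer of the complex spectral segment to the classical Chebyshev setting, and the admissibility of the Bernstein ellipse through the origin) is an accurate reconstruction of how Freund's bound is actually obtained.
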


If spectral estimates are available,
the entry decay of $(t_iA-\xi_jI)^{-1}$ can be cheaply predicted by means of \eqref{estimate2},
so that the sparsity pattern of the banded approximation 
$\reallywidehat{(t_iA-\xi_jI)^{-1}}$ to $(t_iA-\xi_jI)^{-1}$ 
can be estimated a-priori, during its computation. The actual procedure to determine
$\reallywidehat{(t_iA-\xi_jI)^{-1}}$ is discussed in section~\ref{Implementation details}.

The matrix exponential $e^{-t_iA}$ in \eqref{gauss_lobatto} is thus approximated by 
$$\reallywidehat{ \mathcal R}_\nu(t_iA):=
\sum_{j=1}^{\nu-1}2\mbox{Re}\left(\theta_j\reallywidehat{(t_iA-\xi_jI)^{-1}}\right)+
\theta_\nu \reallywidehat{(t_iA-\xi_\nu I)^{-1}}\approx \mathcal{R}_\nu\left(t_iA\right),
\quad i=1,\ldots,\ell.$$
We notice that the entries of the most external diagonals of 
$\mathcal{\reallywidehat R}_\nu(t_iA)$ might be small in magnitude.
To further reduce the bandwidth of $\mathcal{\reallywidehat R}_\nu(t_iA)$, we thus suggest to set to zero those components of 
$\mathcal{\reallywidehat R}_\nu(t_iA)$ that are smaller than $\epsilon_{quad}$, that
is, we replace the matrix $\mathcal{\reallywidehat R}_\nu(t_iA)$ with the matrix $\mathcal{\widetilde R}_\nu(t_iA)$ 
defined as follows
\begin{equation}\label{tildeR}
\mathcal{\widetilde R}_\nu(t_iA):=\mathcal{\reallywidehat R}_\nu(t_iA)-E_i, \quad \left(E_i\right)_{k,j}:=\left\{\begin{array}{l}
                                          \left(\mathcal{\reallywidehat R}_\nu(t_iA)\right)_{k,j}, \; \mbox{if } 
                                          \left|\left(\mathcal{\reallywidehat R}_\nu(t_iA)\right)_{k,j}\right|<\epsilon_{quad},\\
                                          0, \; \mbox{otherwise.}
                                         \end{array}\right.  
\end{equation}

Collecting all these observations, we have
\begin{equation}\label{X_B}
 X(\tau)\approx \frac{\tau}{2}\sum_{i=1}^\ell\omega_i 
 \mathcal{\widetilde R}_\nu(t_iA)D\mathcal{\widetilde R}_\nu(t_iA)=:X_B,
\end{equation}
and the bandwidth $\beta_{X_B}$ of $X_B$ is such that
$\beta_{X_B}\leq 2\max_i\{\beta_{\mathcal{\widetilde R}_\nu(t_iA)}\}+\beta_D$.
The overall procedure for computing $X_B$ is illustrated in Algorithm \ref{X_B_alg}.

\begin{algorithm}
\setcounter{AlgoLine}{0}
\caption{Numerical approximation of $X(\tau)$.\label{X_B_alg}}
\SetKwInOut{Input}{input}\SetKwInOut{Output}{output}
\Input{$A\in\mathbb{R}^{n\times n},$ $A$ SPD, $D\in\mathbb{R}^{n\times n}$, 
$\nu\in\mathbb{N}$, $\epsilon_{B},\epsilon_{quad},\tau>0$}
\Output{$X_B\in\mathbb{R}^{n\times n}$, $X_B\approx X(\tau)$}
\BlankLine
\nl Compute $t_i$, $\omega_i$, $i=1,\ldots,\ell$, for the Gauss-Lobatto formula \eqref{gauss_lobatto}\\
\nl Compute $\xi_j$, $\theta_j$, $j=1,\ldots,\nu$, for the rational Chebyshev approximation 
\eqref{Cheb_ratfun}\\
\nl Set $X_B=0$\\
\For{$i = 1,\dots,\ell$ }{
\nl For $j=1, \ldots, \nu$ compute $\reallywidehat{(t_iA-\xi_jI)^{-1}}$ \\   
\nl Set $\mathcal{\reallywidehat R}_\nu(t_iA):=
\sum_{j=1}^{\nu-1}2\mbox{Re}\left(\theta_j \reallywidehat{(t_iA-\xi_jI)^{-1}}\right)+\theta_\nu 
\reallywidehat{(t_iA-\xi_\nu I)^{-1}}$\\
\nl Compute $\mathcal{\widetilde R}_\nu(t_iA)$ as in \eqref{tildeR}\\
\nl Set $X_B=X_B+\omega_i\mathcal{\widetilde R}_\nu(t_iA)D\mathcal{\widetilde R}_\nu(t_iA)$\\
 }
\nl Set $X_B=\frac{\tau}{2}X_B$\\
\end{algorithm}

\subsection{Implementation details for computing $X_B$}\label{Implementation details}
In this section we illustrate some details to efficiently implement Algorithm~\ref{X_B_alg}.
%

For given coefficients of the numerator and denominator polynomials (see, e.g., 
\cite{Cody1969}),
the weights and poles of the rational Chebyshev function \eqref{Cheb_ratfun} 
can be computed by the residue theorem, implemented in Matlab via the function {\tt residue}.


The approximation of $(t_iA-\xi_jI)^{-1}$ for all considered $i$'s and $j$'s is the most time
consuming part of the process to obtain $X_B$. This is performed by using a sparse approximate inverse approach,
which has been extensively studied in the context of preconditioning techniques for solving 
large scale linear systems;
see, e.g., \cite{Benzi1999},\cite{Benzi2003},\cite{Bertaccini2004}.
Furthermore, many packages such as
SPAI\footnote{{\tt https://cccs.unibas.ch/lehre/software-packages/}} and
FSAIPACK\footnote{{\tt http://hdl.handle.net/11577/3132741}}
are available on-line for its computation.
Unfortunately, open software seldom handles complex arithmetic, as it occurs here whenever
the poles have nonzero imaginary part.


With the notation in Proposition \ref{Prop:Freund}, we have
$$
\left|\left((t_iA-\xi_jI)^{-1}\right)_{p,q}\right|\leq \frac{2R}{|\lambda_2-\lambda_1|}B(a)
\left(\frac{1}{R}\right)^{\frac{|p-q|}{\beta_A}},\quad p> 1,
$$ 
and this allows us to explicitly compute only those entries that are above a given tolerance, taking
symmetry into account.

For every column $q=1,\ldots,n$, 
 we compute $\bar p_q\,(t_i,\xi_j)$ such that
\begin{equation}\label{bar_p}
\bar p_q\,(t_i,\xi_j)=\mbox{argmin}\left\{p>1, \mbox{ s.t. } \frac{2R}{|\lambda_2-\lambda_1|}B(a)
\left(\frac{1}{R}\right)^{\frac{|p-q|}{\beta_A}}\leq\epsilon_{B}\right\}, 
\end{equation}
where $\epsilon_{B}$ is a given threshold.
Defining $\widehat p_q\,(t_i,\xi_j):= \min\{n,q+\bar p_q\,(t_i,\xi_j)\}$, we calculate  
$\left((t_iA-\xi_jI)^{-1}\right)_{p,q},$ $q=1,\ldots,n,$ $p=q,\ldots,\widehat p_q(t_i,\xi_j)$
that are the most meaningful entries of $t_iA-\xi_jI$. Indeed, only for these indices, it holds
$|\left((t_iA-\xi_jI)^{-1}\right)_{p,q}|\geq\epsilon_B$.
To this end, we perform a complex (symmetric)
LDLt factorization of  
$t_iA-\xi_jI$, that is $t_iA-\xi_jI=L(t_i,\xi_j)D(t_i,\xi_j)L(t_i,\xi_j)^T$, and solve
\begin{equation}\label{LU}
L(t_i,\xi_j)D(t_i,\xi_j)L(t_i,\xi_j)^Ts_q=e_q,\quad q=1,\ldots,n. 
\end{equation}
We do not compute all entries of $s_q$ but only those in 
position $r$, $r=q,\ldots,\widehat p_q\,(t_i,\xi_j)$,
suitably \! performing \!
the \!forward \!and \!backward substitution \!with $\!L(t_i,\xi_j)$ and $\!L(t_i,\xi_j)^T$ respectively.
The computed $s_q$ approximates the $q$-th column of $(t_iA+\xi_jI)^{-1}$, in particular, 
$(s_q)_r=((t_iA+\xi_jI)^{-1}e_q)_r$ for $r=q,\ldots,\widehat p_q\,(t_i,\xi_j)$.

If $\mathfrak{S}=[s_1,\ldots,s_n]$ and $\mathfrak{s}$ denotes its diagonal,
we define $\reallywidehat{(t_iA+\xi_jI)^{-1}}:=\mathfrak{S}+\mathfrak{S}^T-\mbox{diag}(\mathfrak{s})$,
and it holds
$\|\reallywidehat{(t_iA+\xi_jI)^{-1}}-(t_iA+\xi_jI)^{-1}\|_{\max}<\epsilon_{B}$. 
 Moreover, $\reallywidehat{(t_iA+\xi_jI)^{-1}}$ is a banded matrix with bandwidth
$$
\beta_{\reallywidehat{(t_iA+\xi_jI)^{-1}}}=\max_{q=1,\ldots,n}\widehat p_q\,(t_i,\xi_j).
$$
Therefore,
the bandwidth of the final approximation $X_B$ in \eqref{X_B} will be such that 
$\beta_{X_B}\leq 2\max_{i,j}\beta_{\reallywidehat{(t_iA+\xi_jI)^{-1}}}+\beta_D$.

 The overall procedure is summarized in Algorithm~\ref{sparseInv} where complex arithmetic is 
necessary due to the presence of the shift $\xi_j$.
%
%
%
The computational cost of the complete algorithm is proportional to the problem size $n$. Indeed,
since $t_iA+\xi_jI$ is a $\beta_A$-banded matrix, the computation of $L(t_i,\xi_j)$ and $D(t_i,\xi_j)$ requires 
$\mathcal{O}(n\beta_A)$ flops.
 Notice that the computational core of Algorithm~\ref{sparseInv} consists of inner products with vectors of 
length (at most) $\widehat p_q\,(t_i,\xi_j)-q+1$.
  Therefore, the computation of the $\widehat p_q\,(t_i,\xi_j)-q+1$ entries of $s_q$ costs
$\mathcal{O}(\widehat p_q\,(t_i,\xi_j)-q)$ flops.
The overall computational cost of \eqref{LU}, for all $q$, thus amounts to 
$\mathcal{O}(n\max_q\{\widehat p_q\,(t_i,\xi_j)-q\})$ flops.

The matrix $\reallywidehat{(t_iA+\xi_jI)^{-1}}$ has to be computed for all 
$i=1,\ldots,\ell$, $j=1,\ldots,\nu$, leading to a computational cost of 
$\mathcal{O}(n\ell\nu\max_{q,i,j}\{\widehat p_q\,(t_i,\xi_j)-q\})$ flops.
Moreover,
thanks to the observation in \eqref{reducing_nu}, we 
can compute $\reallywidehat{(t_iA+\xi_jI)^{-1}}$, for $i=1,\ldots,\ell$, and only few terms in $j$.
Fixing $i\in\{1,\ldots,\ell\}$, the matrices $\reallywidehat{(t_iA+\xi_jI)^{-1}}$, $j=1,\ldots,\nu$, $j$ odd,
are computed in parallel, thus
 decreasing the cost of the overall procedure to
$\mathcal{O}(n\ell\max_{q,i,j}\{\widehat p_q\,(t_i,\xi_j)-q\})$ flops.

Optimal parameter $\nu$ and thresholds $\epsilon_B$, $\epsilon_{quad}$ requested by Algorithm~\ref{X_B_alg}
may be tricky to determine in an automatic manner. Our numerical experience  seems to
suggest that by setting $\epsilon_B=\epsilon_{quad}$ and $\nu=\lfloor\log(1/\epsilon_{quad})\rfloor-1$, the
performance is not affected, while we are able to save the user from selecting two more parameters.
With these choices we observed that
$\|e^{-t_iA}-\mathcal{\widetilde R}_\nu(t_iA)\|_2\approx \epsilon_{quad}$ and this accuracy is maintained also by 
the adaptive quadrature formula.

\subsection{Approximating $e^{-\tau A} X e^{-\tau A}$ by a low-rank matrix}\label{IItau}
%
We next turn our attention to the second component in (\ref{eqn:splitting}),
 $e^{-\tau A} X e^{-\tau A}$.
We show that for large $\tau$ this matrix can be well 
approximated by a low-rank matrix. 
 In the following we shall assume that the eigenvalues of the SPD matrix $A$
decay more than linearly, so as to ensure the 
low numerical rank of $e^{-\tau A}$ for $\tau$ sufficiently large.

\begin{proposition}\label{Cor.IItau}
Let $\lambda_1\geq\ldots\geq\lambda_n>0$ be the eigenvalues of $A$ and $X$ as in \eqref{Xintegral}. 
Then, $\mbox{rank}(e^{-\tau A} X e^{-\tau A})\searrow0$ as
 $\tau\rightarrow+\infty$, and
 there exists a matrix $X_L\in\mathbb{R}^{n\times n}$, $\mbox{rank}(X_L)=\bar \ell\ll n$, such that
 \begin{equation}\label{EstimateXL}
\|e^{-\tau A} X e^{-\tau A}-X_L\|_2^2\leq \frac{3}{4\lambda_n^2}e^{-2\tau(\lambda_n+\lambda_{n-\bar \ell})}\|D\|_F^2,  
 \end{equation}

\end{proposition}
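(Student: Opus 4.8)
The plan is to diagonalize $A$ and then truncate $e^{-\tau A}$ to its dominant spectral directions. Write the eigendecomposition $A=Q\Lambda Q^T$ with $\Lambda=\mathrm{diag}(\lambda_1,\dots,\lambda_n)$, $\lambda_1\ge\dots\ge\lambda_n>0$. Since the spectral and Frobenius norms are orthogonally invariant and $e^{-\tau A}=Qe^{-\tau\Lambda}Q^T$, the substitution $\widetilde X:=Q^TXQ$, $\widetilde D:=Q^TDQ$ turns \eqref{main.eq} into $\Lambda\widetilde X+\widetilde X\Lambda=\widetilde D$, so that $\widetilde X_{ij}=\widetilde D_{ij}/(\lambda_i+\lambda_j)$ (equivalently, apply $\int_0^{\infty}e^{-t(\lambda_i+\lambda_j)}\,dt=(\lambda_i+\lambda_j)^{-1}$ to \eqref{Xintegral}), and $\|\widetilde D\|_F=\|D\|_F$. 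It therefore suffices to prove the statement with $A$ replaced by $\Lambda$ and $D$ by $\widetilde D$, which makes every matrix involved diagonal or block-structured in the index set $\{1,\dots,n\}$.

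For the first claim I would argue monotone decay directly. For $\tau_2>\tau_1\ge 0$,
\[
e^{-\tau_2 A}Xe^{-\tau_2 A}=e^{-(\tau_2-\tau_1)A}\bigl(e^{-\tau_1 A}Xe^{-\tau_1 A}\bigr)e^{-(\tau_2-\tau_1)A},
\]
and since $\|e^{-(\tau_2-\tau_1)A}\|_2=e^{-(\tau_2-\tau_1)\lambda_n}\le 1$, the standard inequality $\sigma_k(MYN)\le\|M\|_2\|N\|_2\,\sigma_k(Y)$ shows that every singular value $\sigma_k(e^{-\tau A}Xe^{-\tau A})$ is non-increasing in $\tau$; moreover $\sigma_1\le e^{-2\tau\lambda_n}\|X\|_2\to 0$. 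Hence, for any fixed positive threshold, the numerical rank decreases monotonically to $0$, which is the meaning of $\mathrm{rank}(e^{-\tau A}Xe^{-\tau A})\searrow 0$.

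For the quantitative bound I would split $e^{-\tau\Lambda}=E+\delta$, where $E:=\sum_{i=n-\bar\ell+1}^{n}e^{-\tau\lambda_i}e_ie_i^T$ collects the $\bar\ell$ largest values $e^{-\tau\lambda_n}\ge\dots\ge e^{-\tau\lambda_{n-\bar\ell+1}}$ (those of smallest $\lambda_i$) and $\delta$ collects the rest. Take $X_L:=Q\,(E\widetilde X E)\,Q^T$, of rank at most $\bar\ell$; by orthogonal invariance $\|e^{-\tau A}Xe^{-\tau A}-X_L\|_2=\|e^{-\tau\Lambda}\widetilde X e^{-\tau\Lambda}-E\widetilde X E\|_2$, and expanding $(E+\delta)\widetilde X(E+\delta)$ yields
\[
e^{-\tau\Lambda}\widetilde X e^{-\tau\Lambda}-E\widetilde X E=\delta\widetilde X E+E\widetilde X\delta+\delta\widetilde X\delta .
\]
Because $E$ and $\delta$ are diagonal with complementary supports, these three terms are supported on the top-right, bottom-left, and top-left index blocks, hence have pairwise disjoint supports, so
\[
\|e^{-\tau A}Xe^{-\tau A}-X_L\|_2^2\le\|\delta\widetilde X E\|_F^2+\|E\widetilde X\delta\|_F^2+\|\delta\widetilde X\delta\|_F^2 .
\]
A typical entry of any term has modulus $e^{-\tau(\lambda_i+\lambda_j)}|\widetilde D_{ij}|/(\lambda_i+\lambda_j)$. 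On each of the three blocks one has $\lambda_i+\lambda_j\ge\lambda_n+\lambda_{n-\bar\ell}$ (on the top-left block this follows from $2\lambda_{n-\bar\ell}\ge\lambda_n+\lambda_{n-\bar\ell}$, on the others from pairing one index $\ge\lambda_n$ with one $\ge\lambda_{n-\bar\ell}$) and $\lambda_i+\lambda_j\ge 2\lambda_n$. Using the first to bound $e^{-2\tau(\lambda_i+\lambda_j)}\le e^{-2\tau(\lambda_n+\lambda_{n-\bar\ell})}$, the second to bound $(\lambda_i+\lambda_j)^{-2}\le(2\lambda_n)^{-2}$, and $\sum_{\text{block}}\widetilde D_{ij}^2\le\|\widetilde D\|_F^2=\|D\|_F^2$, each Frobenius norm is at most $\tfrac{1}{4\lambda_n^2}e^{-2\tau(\lambda_n+\lambda_{n-\bar\ell})}\|D\|_F^2$; adding the three contributions produces the factor $\tfrac34$ and hence \eqref{EstimateXL}.

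The main obstacle is conceptually mild but is where the care lies: recognizing that the three error terms have disjoint supports, so their Frobenius norms combine in quadrature and one may pass from the spectral norm on the left to three separately estimable blocks without incurring cross terms, and verifying that $\lambda_i+\lambda_j\ge\lambda_n+\lambda_{n-\bar\ell}$ holds uniformly on each block. Bounding each block's $\widetilde D$-mass by the full $\|D\|_F^2$ is what turns the sharper factor $\tfrac14$ into the stated $\tfrac34$.
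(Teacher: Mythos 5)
Your proof is correct, and structurally it is the same argument as the paper's: you diagonalize $A$, choose $X_L$ as the compression of $e^{-\tau A}Xe^{-\tau A}$ onto the invariant subspace of the $\bar\ell$ smallest eigenvalues (your $Q\,(E\widetilde{X}E)\,Q^T$ is exactly the paper's $Q_2e^{-\tau\Lambda_2}Y_{22}e^{-\tau\Lambda_2}Q_2^T$, since your $\widetilde X$ is the paper's $Y$), and estimate the three remaining blocks of the error. Where you genuinely differ is in how those three blocks are combined and bounded. The paper applies the triangle inequality in the spectral norm, bounds each block by its Frobenius norm and then by $\|Y\|_F$, and finishes with $\|Y\|_F\le \|D\|_F/(2\lambda_n)$ from the diagonal Lyapunov equation; you instead observe that the three error blocks have pairwise disjoint supports, so their squared Frobenius norms add, and then you bound entrywise via $\widetilde X_{ij}=\widetilde D_{ij}/(\lambda_i+\lambda_j)$ with per-block lower bounds on $\lambda_i+\lambda_j$. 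Your bookkeeping is actually tighter: the paper's displayed chain, read literally (each $\|Y_{st}\|_F\le\|Y\|_F$ inserted \emph{before} squaring the sum of three terms), yields a constant $9$ rather than $3$, and recovering $3$ requires precisely the block-disjointness you invoke (equivalently, Cauchy--Schwarz together with $\|Y_{11}\|_F^2+\|Y_{12}\|_F^2+\|Y_{21}\|_F^2\le\|Y\|_F^2$). As you note, your route even shows the constant can be sharpened to $1$, since the three disjoint blocks' $\widetilde D$-masses jointly sum to at most $\|D\|_F^2$. Your monotone-singular-value argument for $\mathrm{rank}(e^{-\tau A}Xe^{-\tau A})\searrow 0$ is also a more careful rendering of the numerical-rank claim than the paper's, which simply appeals to $e^{-\tau\lambda_i}\to 0$; strictly speaking the algebraic rank is constant in $\tau$, so the statement can only be about numerical rank, and your formulation via $\sigma_k(MYN)\le\|M\|_2\|N\|_2\,\sigma_k(Y)$ makes that precise.
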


{\it Proof.}
Let $A=Q\Lambda Q^T$, $\Lambda=\mbox{diag}\,(\lambda_1,\ldots,\lambda_n)$
be the eigendecomposition of $A$. Then, we can write
$e^{-\tau A} X e^{-\tau A}= Qe^{-\tau\Lambda}(Q^TXQ)e^{-\tau\Lambda}Q^T= Qe^{-\tau\Lambda}Ye^{-\tau\Lambda}Q^T$, 
where $Y\in\mathbb{R}^{n\times n}$ is such that
$\Lambda Y+Y\Lambda=Q^TDQ$.
We notice that $e^{-\tau\lambda_i}\leq e^{-\tau\lambda_j}$ for all $j\leq i$ and 
$e^{-\tau\lambda_i}\rightarrow0,$ $\tau\rightarrow+\infty$, for all $i=1,\ldots,n$.
 Hence, $e^{-\tau A} X e^{-\tau A}=Qe^{-\tau\Lambda}Ye^{-\tau\Lambda}Q^T$
 is numerically low-rank as $\tau\rightarrow+\infty$ since 
 $\mbox{rank}(e^{-\tau\Lambda})=\mbox{rank}\left(\mbox{diag}(e^{-\tau\lambda_1},\ldots,e^{-\tau\lambda_n})\right)
 \searrow0$ as $\tau\rightarrow+\infty$.

 For a fixed $\bar\ell$, we consider the partition  $Q=[Q_1,Q_2],\; Q_1\in\mathbb{R}^{n\times (n-\bar\ell)},
 Q_2\in\mathbb{R}^{n\times \bar\ell}$, $e^{-\tau\Lambda}=\mbox{blkdiag}(e^{-\tau\Lambda_1},e^{-\tau\Lambda_2})$, 
 $\Lambda_1=\mbox{diag}(\lambda_1,\ldots,\lambda_{n-\bar\ell}),
 \Lambda_2=\mbox{diag}(\lambda_{n-\bar\ell+1},\ldots,\lambda_n)$, and 
 $  Y=[Y_{11}, Y_{12}; Y_{21}, Y_{22}]$
with blocks $Y_{st}$, $s,t=1,2$, of conforming dimensions, that is
$Y_{st}$ is the solution of the Sylvester equation $\Lambda_sY_{st}+Y_{st}\Lambda_t=Q_s^T DQ_t$, $s,t=1,2$.
  Then,
 {\small
 \begin{align*}
  e^{-\tau A} X e^{-\tau A}&= Qe^{-\tau\Lambda}Ye^{-\tau\Lambda}Q^T=
  [Q_1,Q_2]\begin{bmatrix}
                                         e^{-\tau\Lambda_1} & \\
                                             & e^{-\tau\Lambda_2} \\
                                        \end{bmatrix}\begin{bmatrix}
                                         Y_{11} & Y_{12}\\
                                           Y_{21}  & Y_{22} \\
                                        \end{bmatrix}\begin{bmatrix}
                                         e^{-\tau\Lambda_1} & \\
                                             & e^{-\tau\Lambda_2} \\
                                        \end{bmatrix}\begin{bmatrix}
                                         Q_1^T\\
                                           Q_2^T \\
                                        \end{bmatrix}. \\
\end{align*}
 }
 Defining $X_L:=Q_2e^{-\tau\Lambda_2}Y_{22}e^{-\tau\Lambda_2}Q_2^T$, $\mbox{rank}(X_L)=\bar\ell$, we have
\begin{eqnarray*}
  \|e^{-\tau A} X e^{-\tau A}-X_L\|_2^2&=&\left\|[Q_1,Q_2]\begin{bmatrix}
                                         e^{-\tau\Lambda_1} & \\
                                             & e^{-\tau\Lambda_2} \\
                                        \end{bmatrix}\begin{bmatrix}
                                         Y_{11} & Y_{12}\\
                                           Y_{21}  & 0 \\
                                        \end{bmatrix}\begin{bmatrix}
                                         e^{-\tau\Lambda_1} & \\
                                             & e^{-\tau\Lambda_2} \\
                                        \end{bmatrix}\begin{bmatrix}
                                         Q_1^T\\
                                           Q_2^T \\
                                        \end{bmatrix}\right\|_2^2\\ 
                                        &=&\left\|\begin{bmatrix}
                                         e^{-\tau\Lambda_1} & \\
                                             & e^{-\tau\Lambda_2} \\
                                        \end{bmatrix}\begin{bmatrix}
                                         Y_{11} & Y_{12}\\
                                           Y_{21}  & 0 \\
                                        \end{bmatrix}\begin{bmatrix}
                                         e^{-\tau\Lambda_1} & \\
                                             & e^{-\tau\Lambda_2} \\
                                        \end{bmatrix}\right\|_2^2\\
  &\leq &\left(\|e^{-\tau\Lambda_1}Y_{11}e^{-\tau\Lambda_1}\|_2+
  \|e^{-\tau\Lambda_2}Y_{21}e^{-\tau\Lambda_1}\|_2+\|e^{-\tau\Lambda_1}Y_{12}e^{-\tau\Lambda_2}\|_2\right)^2\\
  &\leq &\left( e^{-2\tau\lambda_{n-\bar\ell}}\|Y_{11}\|_2+e^{-\tau(\lambda_n+\lambda_{n-\bar\ell})}\|Y_{21}\|_2
 +e^{-\tau(\lambda_n+\lambda_{n-\bar\ell})}\|Y_{12}\|_2\right)^2\\
 &\leq & \left(e^{-2\tau\lambda_{n-\bar\ell}}\|Y_{11}\|_F+e^{-\tau(\lambda_n+\lambda_{n-\bar\ell})}\|Y_{21}\|_F
 +e^{-\tau(\lambda_n+\lambda_{n-\bar\ell})}\|Y_{12}\|_F\right)^2\\
 &\leq & \left(e^{-2\tau\lambda_{n-\bar\ell}}+2e^{-\tau(\lambda_n+\lambda_{n-\bar\ell})}\right)^2\|Y\|_F^2 \\
 &\leq & \left(e^{-\tau\lambda_{n-\bar\ell}}+2e^{-\tau\lambda_n}\right)^2e^{-2\tau\lambda_{n-\bar\ell}}\|Y\|_F^2 
 \leq 3e^{-2\tau(\lambda_n+\lambda_{n-\bar\ell})}\|Y\|_F^2.
 \end{eqnarray*}
 Since $Y$ is such that $\Lambda Y+Y\Lambda=Q^T DQ$, it holds 
 $\|Y\|_F^2\leq \frac{\|D\|_F^2}{4\lambda_n^2}.$ Therefore,
 we can write
$$
 \|e^{-\tau A} X e^{-\tau A}-X_L\|_2^2
\leq 
\frac{3}{4\lambda_n^2} e^{-2\tau(\lambda_n+\lambda_{n-\bar \ell})}\|D\|_F^2 . \qquad \square
$$

%
%

The proof is constructive, since it provides an explicit form for $X_L$, that is 
 $X_L=Q_2e^{-\tau\Lambda_2}Y_{22}e^{-\tau\Lambda_2}Q_2^T$, where $\Lambda_2$ contains the $\bar\ell$ 
eigenvalues closest to
the origin, and the columns of $Q_2$ constitute
 the associated invariant subspace basis; $Y_{22}$ is
the solution of a reduced Lyapunov equation.

Depending on the eigenvalue distribution,
Proposition~\ref{Cor.IItau} shows that a good approximation may be obtained
by using only few of the eigenvectors of $A$, where however
$\bar \ell$ is not known a priori. 
Moreover, the computation of $\bar \ell$ eigenpairs of a large matrix, though SPD and banded, may be too expensive.
We thus propose to employ a Krylov subspace type procedure to capture information
on the relevant portion of the eigendecomposition of $A$. More precisely, let
$\mathbf{K}_m(A^{-1},v):=\mbox{Range}([v,A^{-1}v,\ldots,A^{-m+1}v])$ 
where $v\in\mathbb{R}^n$ is a random vector with unit norm, let the columns of
$V_m=[v_1,\ldots,v_m]\in\mathbb{R}^{n\times m}$, $m\ll n$, be an orthonormal basis of $\mathbf{K}_m(A^{-1},v)$
and $K_m=V_m^TAV_m$.
If $V_m$ is such that $e^{-\tau A} \approx V_m e^{-\tau K_m} V_m^T$, then we approximate
\begin{equation}\label{eqn:expA}
e^{-\tau A} X e^{-\tau A} \approx V_m\left(e^{-\tau K_m} \left(V_m^TXV_m\right) e^{-\tau K_m}\right)V_m^T .
 \end{equation}
The use of $A^{-1}$ in the definition of the Krylov subspace
$\mathbf{K}_m(A^{-1},v)$ is geared towards a fast approximation of the smallest eigenvalues of $A$
and the associated eigenvectors, particularly suitable for the approximation of the exponential
\cite{Eshof2006}.
 Since $e^{-\tau A}$ and $A$ commute,
we observe that $e^{-\tau A} X e^{-\tau A}$ solves the Lyapunov equation
$$
A e^{-\tau A} X e^{-\tau A}  + e^{-\tau A} X e^{-\tau A}  A = e^{-\tau A} D e^{-\tau A} .
$$
Substituting the approximation in (\ref{eqn:expA}) we can define the following residual matrix
\begin{eqnarray*}
{\cal R}_m &=& A V_m e^{-\tau K_m} ( V_m^T XV_m ) e^{-\tau K_m} V_m^T +
V_m e^{-\tau K_m} ( V_m^T XV_m ) e^{-\tau K_m} V_m^T A \\
& & \,\, - V_m e^{-\tau K_m} ( V_m^T D V_m) e^{-\tau K_m} V_m^T.
\end{eqnarray*}
To complete the approximation, we need to replace $V_m^T X V_m$ with 
some easily computable quantity $Z_m \approx V_m^T X V_m$, so that
the final approximation will be
$$
e^{-\tau A} X e^{-\tau A} \approx V_m\left(e^{-\tau K_m} Z_m e^{-\tau K_m}\right)V_m^T.
$$
To this end, we impose the
standard matrix Galerkin condition on the residual matrix $\mathcal{R}_m$, that is 
$V_m^T\mathcal{R}_mV_m=0$. Explicitly writing all terms in this matrix equation 
 leads to the solution of the following $m\times m$ Lyapunov equation
 \begin{equation}\label{eq:projected}
  K_mZ_m+Z_mK_m=D_m,
 \end{equation}
where $D_m=V_m^TDV_m$;  see, e.g., \cite{Simoncini2016}. Note that the matrix exponential terms 
$e^{-\tau K_m}$ simplify.
For $m\ll n$ equation \eqref{eq:projected} could be solved
by decomposition-based methods such as the Bartels-Stewart 
method \cite{Bartels1972}, or its symmetric version, the Hammarling method \cite{Hammarling1982}.
We opt for the explicit computation, since the eigendecomposition is also used
to get the final matrix $S_m$.
Let $K_m=\Pi_m\Psi_m\Pi_m^T$ with $\Psi_m=\mbox{diag}(\psi_1,\ldots,\psi_m)$
be the eigendecomposition of $K_m$. Plugging these matrices in 
\eqref{eq:projected} gives
\begin{equation}\label{eq:projected2}
\Psi_m\widehat Z_m+\widehat Z_m\Psi_m=\Pi_m^TD_m\Pi_m, 
\end{equation}
where $\widehat Z_m=\Pi_m^TZ_m\Pi_m$. Since $\Psi_m$ is diagonal, we can write 
$\left(\widehat Z_m\right)_{i,j}=\frac{\left(\Pi_m^TD_m\Pi_m\right)_{i,j}}{\psi_i+\psi_j}$.
With $\widehat Z_m$ at hand, and with its eigendecomposition being
$\widehat Z_m=W\Theta W^T$, we can set
\begin{eqnarray}\label{eqn:S}
S_m:=V_m\left(\Pi_me^{-\tau \Psi_m}W \Theta^{1/2}\right), \qquad {\rm so \,\, that} \quad
e^{-\tau A} X e^{-\tau A}\approx S_m S_m^T. 
\end{eqnarray}

A rank reduction of $S_m$ can be performed if some of the diagonal elements of $\Theta^{1/2}$
fall below a certain tolerance, so that the corresponding columns can be dropped. This post-processing
gives rise to a thinner matrix $S_m$, with fewer than $m$ columns.


Assume that the matrix $X_B$ in \eqref{X_B} has been already computed.
Then the space $\mathbf{K}_m(A^{-1},v)$ is expanded until the residual norm of the original problem
$$
\|R\|_F:=\|A(X_B+S_mS_m^T)+(X_B+S_mS_m^T)A-D\|_F,
$$
is sufficiently small.
Exploiting the sparsity of $X_B$ and the low-rank property of $S_mS_m^T$, 
the quantity $\|R\|_F$ can be computed in $\mathcal{O}(sn)$ flops, where $s=\mbox{rank}(S_m)$, without 
the construction of the large and dense matrix $R$. 
See section~\ref{Implementation details2} for more details. 
The overall procedure is summarized in Algorithm~\ref{iterativeZ}.

\begin{algorithm}
\setcounter{AlgoLine}{0}
\caption{Iterative approximation of $e^{-\tau A} X e^{-\tau A}$.\label{iterativeZ}}
\SetKwInOut{Input}{input}\SetKwInOut{Output}{output}
\Input{$A\in\mathbb{R}^{n\times n},$ $A$ SPD., $D,X_B\in\mathbb{R}^{n\times n}$, $v\in\mathbb{R}^{n}$, 
$\tau,\epsilon_{res},\epsilon_{it}>0$, $m_{\max}\in\mathbb{N}$}
\Output{$S_m,\in\mathbb{R}^{n\times s}$, $s\ll n$, such that $S_m S_m^T\approx e^{-\tau A} X e^{-\tau A}$}
\BlankLine
\nl Set $\mu=\|D\|_F$\\
\nl Set $V_1=v/\|v\|$  \\
\For{$m = 1,2,\dots$ until convergence }{
\nl Expand $K_m=V_m^TAV_m,$ $D_m=V_m^TDV_m$ \label{update}\\
\nl Compute the eigendecomposition $K_m=\Pi_m\Psi_m\Pi_m^T$	\\
\nl Solve $\Psi_m\widehat Z_m+\widehat Z_m\Psi_m=\Pi_m^TD_m\Pi_m$ \\
\nl Compute the eigendecomposition $\widehat Z_m=W\Theta W^T$ \\ 
\nl Set 
$S_m:=V_m\left(\Pi_me^{-\tau \Psi_m}W \Theta^{1/2}\right)$ and reduce columns if desired \\
\nl Compute $\|R\|_F/\|D\|_F$ \label{res}\\
\nl \If{$\|R\|_F/\|D\|_F<\epsilon_{res}$ {\rm \textbf{or}} $|\|R\|_F-\mu|/\|R\|_F<\epsilon_{it}$ {\rm \textbf{or}} 
$m> m_{\max}$}{ 
\nl \textbf{Stop} \label{stop}\\ }
\nl $\widehat v=A^{-1}v_m$ \label{lin.solves}\\
\nl $\widetilde v$ $\leftarrow$ Orthogonalize $\widehat v$ w.r.t. $V_{m}$ \label{orth}\\
\nl Set $v_{m+1}=\widetilde v/\|\widetilde v\|$ and $V_{m+1}=[V_m,v_{m+1}]$\label{endArnoldi}\\
\nl Set $\mu=\|R\|_F$\\
}
\end{algorithm}

The two step procedure for the approximation of $X$ provides a threshold for
the final attainable accuracy, and in particular for $\|R\|_F$. Indeed,
assume that $X_B \ne X(\tau)$. Then the final residual cannot
go below the discrepancy $X(\tau) - X_B$ even if the low rank portion of the
solution is more accurate. Indeed,
\begin{eqnarray*}
R &=& A (X_B+S_mS_m^T) + (X_B+S_mS_m^T) A - D \\
&=& \undergroup{A(X_B-X(\tau))+ (X_B-X(\tau)) A} + 
\underbrace{A(X(\tau)+S_mS_m^T) + (X(\tau)+S_mS_m^T) A - D}_{R_{ideal}}.
\end{eqnarray*}
The matrix $R_{ideal}$   
is the ideal (non-computable) residual
one would obtain if the banded part were computed exactly, and we obtain
$$
\|R - R_{ideal} \|_F = \| A(X_B-X(\tau))+ (X_B-X(\tau)) A\|_F  \le 2 \, \|A\|_F \, \|X_B-X(\tau)\|_F  .
$$
 Therefore, even if $S_mS_m^T$ is accurate, $\|R\|_F$ may stagnate at the level
of $\|X_B-X(\tau)\|_F$. To limit this stagnation effect, we include a stopping criterion
that avoids iterating when the residual stops decreasing significantly, and in all our numerical experiments we set 
$\epsilon_{it}=\epsilon_{quad}$, where $\epsilon_{quad}$ is related to the
accuracy of $X_B$.

\subsection{Implementation details for computing the low rank part of the solution}\label{Implementation details2}
We first notice that the update of the matrices $K_m=V_m^TAV_m,$ $D_m=V_m^TDV_m$ in line \ref{update} 
of Algorithm~\ref{iterativeZ} only requires the addition of one extra column and row at each
iteration. 
%
Moreover, for the sake of robustness we perform a full basis orthogonalization at step \ref{orth},
though in exact arithmetic this would be ensured by the symmetry of $A$. Alternative computationally convenient
strategies would include a selective orthogonalization \cite{Parlett1979}.
Moreover, the linear systems with $A$ in line~\ref{lin.solves} can be solved by, e.g., a sparse Cholesky method.

The computational core of Algorithm~\ref{iterativeZ} is the residual norm calculation in line~\ref{res}.
The sparsity of $X_B$ and the low rank of $S_m$ allow for a cheap evaluation of $\|R\|_F$ without
the explicit computation of the dense and large $R$.  To this end, we first write down a quite
standard Arnoldi-type relation for $A$ holding for the space ${\mathbf K}_m(A^{-1},v)$.

\begin{lemma}\label{lemma:arnoldi}
For $v\in\RR^n$, $v\ne 0$, let the columns of $V_m$ be an orthonormal basis of ${\mathbf K}_m(A^{-1},v)$ generated
by the Arnoldi method, so that $A^{-1} V_m \!=\! V_m H_{m}+v_{m+1} h_{m+1,m} e_m^T$. Let
$\eta = \|(I-V_mV_m^T) A v_{m+1}\|$ and
$\widehat v = (I-V_mV_m^T) A v_{m+1}/\eta$. Then 
$$
A V_m = [V_m, \widehat v]\, G_m, \quad {\rm with} \quad G_m = 
\begin{bmatrix}
      I_m & V_m^TAv_{m+1} \\
      0 & \eta\\
      \end{bmatrix}\begin{bmatrix}
       H_m^{-1}\\
      -h_{m+1,m}e_m^TH_m^{-1}\\
      \end{bmatrix} \in \RR^{(m+1)\times m}  .
$$
\end{lemma}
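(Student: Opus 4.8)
The plan is to start from the standard Arnoldi relation for the inverse, $A^{-1}V_m = V_m H_m + v_{m+1}h_{m+1,m}e_m^T$, which is the defining recurrence for the orthonormal basis of $\mathbf{K}_m(A^{-1},v)$ produced by the Arnoldi method. The goal is to turn this into a relation expressing $AV_m$ (rather than $A^{-1}V_m$) in terms of the basis $[V_m,\widehat v]$. The natural first step is to multiply the Arnoldi relation on the left by $A$, but that reintroduces $A^{-1}$ implicitly; instead I would isolate $V_m$ by right-multiplying the relation by $H_m^{-1}$, which is legitimate provided $H_m$ is invertible. Since $A$ is SPD, $K_m = V_m^TAV_m$ is SPD and one can argue $H_m$ is nonsingular (the projected inverse is SPD as well), so this inversion is justified.

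First I would rewrite the Arnoldi identity as
$$
V_m = A^{-1}V_m H_m^{-1} - A^{-1}v_{m+1}h_{m+1,m}e_m^T H_m^{-1},
$$
by moving $V_mH_m$ to the left and applying $H_m^{-1}$ on the right. Then I would apply $A$ to both sides to obtain
$$
AV_m = V_m H_m^{-1} - v_{m+1}h_{m+1,m}e_m^T H_m^{-1}.
$$
Wait — this is not yet in the claimed form, because the right-hand side uses $V_m$ and $v_{m+1}$, not $[V_m,\widehat v]$. The point is that $v_{m+1}$ need not lie in $\mathrm{Range}(V_m)$, but the columns of $AV_m$ generally do not either; the claimed relation instead expresses $AV_m$ using the augmented basis $[V_m,\widehat v]$, where $\widehat v$ is the normalized component of $Av_{m+1}$ orthogonal to $V_m$. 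So the key manipulation is to decompose the vector entering the relation into its $V_m$-component and its orthogonal complement.

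The cleaner route I would take is to compute $AV_m$ column-block-wise directly. From the rearranged identity above, $AV_m = (V_m - v_{m+1}h_{m+1,m}e_m^T)H_m^{-1}$. The troublesome term is $v_{m+1}$; but note $A$ is already applied, so actually the right approach is to re-expand: writing $A^{-1}V_m = V_mH_m + v_{m+1}h_{m+1,m}e_m^T$ and left-multiplying by $A$ gives $V_m = AV_mH_m + Av_{m+1}h_{m+1,m}e_m^T$, hence
$$
AV_m = \bigl(V_m - Av_{m+1}h_{m+1,m}e_m^T\bigr)H_m^{-1}.
$$
Now I substitute the orthogonal decomposition $Av_{m+1} = V_m(V_m^TAv_{m+1}) + \widehat v\,\eta$, with $\eta = \|(I-V_mV_m^T)Av_{m+1}\|$ and $\widehat v$ the unit vector defined in the statement. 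Collecting the $V_m$ and $\widehat v$ blocks and factoring yields exactly
$$
AV_m = [V_m,\widehat v]\begin{bmatrix} I_m & V_m^TAv_{m+1}\\ 0 & \eta\end{bmatrix}\begin{bmatrix} H_m^{-1}\\ -h_{m+1,m}e_m^TH_m^{-1}\end{bmatrix},
$$
which is the asserted $G_m$. The main obstacle is purely bookkeeping: getting the block structure of $G_m$ right by correctly matching the $V_m$-part and the $\widehat v$-part of $-Av_{m+1}h_{m+1,m}e_m^T H_m^{-1}$, and confirming that the identity block $I_m$ arises from the $V_m H_m^{-1}$ term while the last row comes solely from the $\widehat v$-component. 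Establishing invertibility of $H_m$ is the only genuinely structural point, and it follows from the SPD-ness of $A$.
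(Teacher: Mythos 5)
Your proof is correct and follows essentially the same route as the paper: premultiply the Arnoldi relation $A^{-1}V_m = V_mH_m + v_{m+1}h_{m+1,m}e_m^T$ by $A$, postmultiply by $H_m^{-1}$, and split $Av_{m+1}$ into its $V_m$-component plus $\eta\,\widehat v$ to read off $G_m$. (Note only that your abandoned first attempt contains an algebraic slip --- the term should be $v_{m+1}h_{m+1,m}e_m^TH_m^{-1}$, not $A^{-1}v_{m+1}h_{m+1,m}e_m^TH_m^{-1}$ --- but your ``cleaner route'' re-derivation is the correct and complete argument, and your added remark on the invertibility of $H_m = V_m^TA^{-1}V_m$ via positive definiteness is a point the paper leaves implicit.)
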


\begin{proof}
Consider the Arnoldi relation
$A^{-1}V_m=V_{m+1}\underline{H}_m=V_mH_m+v_{m+1}h_{m+1,m}e_m^T$,
where $\underline{H}_m\in\mathbb{R}^{(m+1)\times m}$, $\left(\underline{H}_m\right)_{i,j}=h_{i,j}$,
collects the orthogonalization coefficients stemming from the Arnoldi
procedure in lines \ref{lin.solves}--\ref{endArnoldi} in Algorithm \ref{iterativeZ}. Premultiplying by $A$ and 
postmultiplying by $H_m^{-1}$ we get
$$AV_m=V_mH_m^{-1}-Av_{m+1}h_{m+1,m}e_m^TH_m^{-1}=[V_m, Av_{m+1}]\begin{bmatrix}
                                                                H_m^{-1}\\
                                                                -h_{m+1,m}e_m^TH_m^{-1}\\
                                                               \end{bmatrix}.
$$
Let $\eta\,\widehat v:=Av_{m+1}-V_mV_m^TAv_{m+1}$ where $\eta=\|Av_{m+1}-V_mV_m^TAv_{m+1}\|_2$. Then
$$Av_{m+1}=\eta\,\widehat v+V_mV_m^TAv_{m+1}=[V_m,\widehat v]\begin{bmatrix}
                                                             V_m^TAv_{m+1}\\
                                                             \eta\\
                                                            \end{bmatrix},
$$
so that
\begin{eqnarray*}
AV_m&=&[V_m, Av_{m+1}]\begin{bmatrix}
       H_m^{-1}\\
      -h_{m+1,m}e_m^TH_m^{-1}\\
      \end{bmatrix}
      \\
      \\
      &=&[V_m,\widehat v]\begin{bmatrix}
      I_m & V_m^TAv_{m+1} \\
      0 & \eta\\
      \end{bmatrix}\begin{bmatrix}
       H_m^{-1}\\
      -h_{m+1,m}e_m^TH_m^{-1}\\
      \end{bmatrix}=[V_m,\widehat v]\,G_m,
      \end{eqnarray*}
where $G_m\in\mathbb{R}^{(m+1)\times (m+1)}$ and $W_m:=[V_m,\widehat v]$ has orthonormal columns by construction.
\end{proof}

\vskip 0.1in

\begin{proposition}\label{prop:res}
With the notation of Lemma~\ref{lemma:arnoldi}, let $W_m = [V_m, \widehat v]$ and
$S_m=V_m\left(\Pi_me^{-\tau \Psi_m}W \Theta^{1/2}\right) =: V_m \Delta_m$. Moreover,
let $R_B=AX_B+X_BA-D$ and $\gamma= \|R_B\|_F$.
Then
$$
\|R\|^2 = \gamma^2 + \|J_m\|_F^2 + 
2\, \mbox{trace}\left(J_m\left(W_m^T R_B W_m\right)\right),
$$
where $J_m = \begin{bmatrix}
                         \begin{array}{c|}
                          I_m \\
                          0\\
                         \end{array}\;\;
                         G_m\\
                        \end{bmatrix}
\begin{bmatrix}
             0 & \Delta_m\Delta_m^T\\
             \Delta_m\Delta_m^T & 0\\
            \end{bmatrix}       
       \begin{bmatrix}
             \begin{array}{c|}
              I_m \\
              0\\
              \end{array}
              \;\;G_m\\
               \end{bmatrix}^T \in \RR^{(m+1)\times (m+1)}$.
\end{proposition}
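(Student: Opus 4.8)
The plan is to substitute $S_m=V_m\Delta_m$ into $R$ and to express the low-rank correction $AS_mS_m^T+S_mS_m^TA$ entirely in the orthonormal basis $W_m=[V_m,\widehat v]$ produced by Lemma~\ref{lemma:arnoldi}; once this is done, the orthonormality of $W_m$ makes $\|R\|_F^2$ split into the three announced terms.

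First I would write, directly from the definitions, $R=R_B+AS_mS_m^T+S_mS_m^TA$, with $S_mS_m^T=V_m\Delta_m\Delta_m^TV_m^T$. The pivotal remark is that $W_m$ simultaneously represents $V_m$ and $AV_m$: since $W_m=[V_m,\widehat v]$ we have $V_m=W_m\left[\begin{smallmatrix}I_m\\0\end{smallmatrix}\right]$, while Lemma~\ref{lemma:arnoldi} yields $AV_m=W_mG_m$. Writing $\Sigma_m:=\Delta_m\Delta_m^T$ and $P:=AV_m\Sigma_mV_m^T$, the symmetry of $A$ gives $P^T=V_m\Sigma_mV_m^TA$, so the correction is $P+P^T$. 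Inserting the two representations gives
\[
P+P^T=W_m\Big(G_m\Sigma_m\begin{bmatrix}I_m&0\end{bmatrix}+\begin{bmatrix}I_m\\0\end{bmatrix}\Sigma_mG_m^T\Big)W_m^T=W_mJ_mW_m^T,
\]
and a block multiplication of $\left[\begin{smallmatrix}I_m\\0\end{smallmatrix}\;\;G_m\right]\left[\begin{smallmatrix}0&\Sigma_m\\\Sigma_m&0\end{smallmatrix}\right]\left[\begin{smallmatrix}I_m\\0\end{smallmatrix}\;\;G_m\right]^T$ confirms that the middle factor is exactly the $J_m$ of the statement.

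With $R=R_B+W_mJ_mW_m^T$ at hand, I would expand $\|R\|_F^2=\langle R,R\rangle_F$ into three pieces. The first is $\|R_B\|_F^2=\gamma^2$. For the second, $W_m^TW_m=I_{m+1}$ collapses $\|W_mJ_mW_m^T\|_F^2$ to $\mbox{trace}(J_m^TJ_m)=\|J_m\|_F^2$. For the cross term, the cyclic property of the trace together with $W_m^TW_m=I_{m+1}$ gives $2\,\mbox{trace}(J_m^T\,W_m^TR_BW_m)$, and since $J_m$ and $R_B$ are both symmetric this equals $2\,\mbox{trace}(J_m(W_m^TR_BW_m))$. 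Summing the three contributions yields the claimed identity. All matrices here are real, so no conjugation enters.

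The only delicate point is the block bookkeeping identifying the middle factor of $W_mJ_mW_m^T$ with the stated $J_m$: one must keep track that the selection block $\left[\begin{smallmatrix}I_m\\0\end{smallmatrix}\right]$ comes from $V_m$ while $G_m$ comes from $AV_m$, and that these sit against the off-diagonal $\Sigma_m$ blocks. Beyond this accounting, the three-term expansion and the two trace simplifications are forced by the orthonormality of $W_m$ and the symmetry of $J_m$ and $R_B$, so I anticipate no substantial obstacle.
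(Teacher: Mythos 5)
Your proof is correct and follows essentially the same route as the paper's: both write $R=R_B+AS_mS_m^TA$-correction, factor $AS_mS_m^T+S_mS_m^TA=W_mJ_mW_m^T$ via the representations $V_m=W_m\left[\begin{smallmatrix}I_m\\0\end{smallmatrix}\right]$ and $AV_m=W_mG_m$ from Lemma~\ref{lemma:arnoldi}, and then use the orthonormality of $W_m$ together with symmetry and the cyclic property of the trace to obtain the three-term expansion. Your intermediate device of writing the correction as $P+P^T$ with $P=AV_m\Delta_m\Delta_m^TV_m^T$ is only a cosmetic reorganization of the paper's single block identity $[V_m,\,AV_m]\left[\begin{smallmatrix}0&\Delta_m\Delta_m^T\\\Delta_m\Delta_m^T&0\end{smallmatrix}\right]\left[\begin{smallmatrix}V_m^T\\V_m^TA\end{smallmatrix}\right]$.
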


\vskip 0.1in

\begin{proof}
Recalling that 
$\|G+H\|_F^2=\|G\|_F^2+\|H\|_F^2+2\langle G,H\rangle_F$, it holds
$$
\begin{array}{rll}
 \|R\|_F^2&=&\|A(X_B+S_mS_m^T)+(X_B+S_m S_m^T)A-D\|_F^2\\
 &=&\|AS_mS_m^T+S_mS_m^TA\|_F^2+\|AX_B+X_BA-D\|_F^2\\
 && +2\langle AS_mS_m^T+S_mS_m^TA,AX_B+X_BA-D\rangle_F.\\ 
 \end{array}
$$
The banded matrix $R_B=AX_B+X_BA-D$ and its Frobenius norm can be computed once for all at the beginning of 
Algorithm \ref{iterativeZ}. The computation of the additional two terms can be cheaply carried out
in $\mathcal{O}(sn)$ flops. We first focus on the matrix $AS_mS_m^T+S_mS_m^TA$.
Denoting $\Delta_m:=\Pi_me^{-\tau \Psi_m}W \Theta^{1/2}$, we have
\begin{align}\label{res1}
AS_mS_m^T+S_mS_m^TA
=[V_m, AV_m]\begin{bmatrix}
             0 & \Delta_m\Delta_m^T\\
             \Delta_m\Delta_m^T & 0\\
            \end{bmatrix}\begin{bmatrix}
            V_m^T \\
            V_m^TA\\
            \end{bmatrix}.
\end{align}

Using Lemma~\ref{lemma:arnoldi} we have
$$ AS_mS_m^T+S_mS_m^TA=W_m\underbrace{\begin{bmatrix}
                         \begin{array}{c|}
                          I_m \\
                          0\\
                         \end{array}\;\;
                         G_m\\
                        \end{bmatrix}
\begin{bmatrix}
             0 & \Delta_m\Delta_m^T\\
             \Delta_m\Delta_m^T & 0\\
            \end{bmatrix}       
       \begin{bmatrix}
             \begin{array}{c|}
              I_m \\
              0\\
              \end{array}
              \;\;G_m\\
               \end{bmatrix}^T}_{=:J_m}W_m^T,
$$
so that
$$\|AS_mS_m^T+S_mS_m^TA\|_F^2=\|J_m\|_F^2,$$
and only matrices of order (at most) $m+1$ are involved in the computation of this norm.
Concerning the computation of $\langle AS_mS_m^T+S_mS_m^TA,AX_B+X_BA-D\rangle_F$ we have
\begin{eqnarray*}
\langle AS_mS_m^T+S_mS_m^TA,R_B\rangle_F&=&\mbox{trace}(W_mJ_mW_m^T R_B)
=\mbox{trace}(J_m W_m^T R_B W_m),
\end{eqnarray*}
and, similarly to $K_m$ and $D_m$,
the matrix $W_m^T R_B W_m\in\mathbb{R}^{(m+1)\times(m+1)}$ requires only the two matrix-vector products
$W_m^T R_B [v_m,\widehat v]$ to be updated at each iteration.
\end{proof}

Although the computation of the residual norm costs $\mathcal{O}(sn)$ flops at each iteration,
lines \ref{res}--\ref{stop} still remain among the most expensive steps
of the overall procedure for solving \eqref{main.eq}
and they are thus performed periodically, say every $d$ iterations.

\begin{remark}
The trace appearing in Proposition \ref{prop:res}
can be carefully computed by further exploiting the trace properties and the
definition of $J_m$. Nonetheless, in finite precision arithmetic cancellations
might occur, so that additional care should be taken in case a very small
residual tolerance - below the square root of machine precision - is selected. 
We did not experience this problem in our numerical tests.
\end{remark}

\subsection{Complete numerical procedure and the choice of $\tau$}\label{tau} 
The algorithm we propose, hereafter called
{\sc lyap\_banded}, approximates the solution $X$ to (\ref{main.eq}) as $X\approx X_B + S_m S_m^T$ 
 where $X_B$ is banded
and $S_m$ is low rank. It is important to realize that unless $\tau \to +\infty$,
the entries of $S_mS_m^T$ contribute in a significant way towards the solution, and in particular
to the nonzero entries of the leading banded part of $X$. Indeed, even
assuming that $X_B$ is exact, that is $X_B = X(\tau)$, we obtain
\begin{eqnarray}\label{eqn:error}
e^{-2\tau\lambda_{\max}(A)}\leq\frac{\|X - X_B \|}{\|X\|} \le e^{-2\tau\lambda_{\min}(A)},
\end{eqnarray}
since 
$\|X - X_B \|=\|e^{-\tau A}X e^{-\tau A} \|\leq\|e^{-\tau A}\|^2\|X\|=e^{-2\tau\lambda_{\min}(A)}\|X\|$,
and
$\|e^{-\tau A}X e^{-\tau A} \|\geq\frac{\|X\|}{\|e^{\tau A}\|^2}=e^{-2\tau\lambda_{\max}(A)}\|X\|.$

The performance of {\sc lyap\_banded} crucially depends on the choice of $\tau$. 
Indeed, a large $\tau$ corresponds to a wider bandwidth of $X(\tau)$ and thus
to a possibly too wide $\beta_{X_B}$. On the other hand, 
 Proposition~\ref{Cor.IItau} says that $e^{-\tau A}Xe^{-\tau A}$ is numerically low rank if $\tau\rightarrow+\infty$. 
Therefore, if the selected value of $\tau$ is too small then the numerical rank of $e^{-\tau A}Xe^{-\tau A}$ 
may be so large that an accurate low rank approximation is hard to determine;
see Table~\ref{IllEx2} in section~\ref{Numerical_Examples}.
A trade-off between the bandwidth of $X_B$ and the rank of $S_m$ has to be sought.
To make the action of $e^{-\tau A}$ scaling-independent, and
without loss of generality, equation \eqref{main.eq} can be scaled by $1/\lambda_{\min}(A)$, and this is
done in all our experiments.
This seemed to also speed-up the computation of the adaptive quadrature formula.

To automatically compute a suitable value of $\tau$  we proceed as follows.
Intuitively, we fix a maximum value for $\beta_{X_B}$ and compute the corresponding $\tau$ by using
the decay estimate of Theorem~\ref{Th:decay_exp} applied to $X(\tau)$.
If $X(\tau)$ is approximated by the Gauss-Lobatto quadrature formula 
\eqref{gauss_lobatto}, the decay in its off-diagonal entries
 can be estimated by that of $e^{-\tau A}De^{-\tau A}$ 
(for $i=\ell$, $x_i=1$ and $t_i=\tau$ in \eqref{gauss_lobatto}). Note that
 according to Theorem~\ref{Th:decay_exp}, 
the entries of $e^{-\tau A}$ contribute the most to the bandwidth of $e^{-tA}$, $t\in [0, \tau]$
away from the main diagonal, and thus to
the right-hand side of \eqref{gauss_lobatto}. 
In addition, following the discussion at the beginning of section~\ref{Ill_conditionedA},
the multiplication by $D$ does not seem to dramatically influence the final
bandwidth of $e^{-\tau A}De^{-\tau A}$.
Let us thus focus on the first column of $e^{-\tau A}$.
To apply Theorem~\ref{Th:decay_exp} to $e^{-\tau A}$
we fix a value $\beta_{\max}\in\mathbb{N}$ and 
define $\bar\xi:=\lceil |\beta_{\max}-1|/\beta_A\rceil$. 
For\footnote{We recall that for the scaled problem, $\lambda_{\min}(A)=1$,
however for the sake of generality we prefer not to substitute its value.}
$\rho=(\lambda_{\max}(A)-\lambda_{\min}(A))/4$ 
and $\sqrt{4\rho \tau }\leq \bar\xi\leq 2\rho \tau$, we have
{
\begin{eqnarray}\label{bound_tau1}
  |(e^{-\tau A})_{\beta_{\max},1}|&\leq e^{-\tau \lambda_{\min}(A)}|(e^{-\tau (A-\lambda_{\min}(A)I)})_{\beta_{\max},1}|
  \leq 10\,e^{-\frac{\bar\xi^2}{5\rho \tau}}e^{-\tau \lambda_{\min}(A)}.  
\end{eqnarray}
  Similarly, for $\bar\xi\geq 2\rho \tau$,
  \begin{equation}\label{bound_tau2}
|(e^{-\tau A})_{\beta_{\max},1}|\leq 
10\frac{e^{-\rho \tau}}{\rho \tau}\left(\frac{e\rho \tau}{\bar\xi}\right)^{\bar\xi}
  e^{-\tau \lambda_{\min}(A)}.   
  \end{equation}
Our aim is to estimate for which $\tau$ the quantity 
$|(e^{-\tau A})_{\beta_{\max},1}|$ is not negligible while the components from $\beta_{\max}+1$ up
to $n$ in the same column can be considered as tiny.
Since we would like to have a reasonably large value of $\tau$ while
 maintaining $\beta_{\max}$ moderate, we only consider the bound 
\eqref{bound_tau1} in our strategy. Indeed, \eqref{bound_tau2} requires $\bar\xi\geq 2\rho \tau$, 
that is a very large $\beta_{\max}$, to obtain a sizable value of $\tau$. Fixing a threshold $\epsilon_\tau$,
we can compute $\tau$ as
\begin{equation}\label{tau_condition}
  \tau_{opt}=\mbox{argmin}\,\{t\geq0\mbox{ s.t. }|(e^{-t A})_{\beta_{\max},1}|\geq\epsilon_\tau\}.
\end{equation}
In \cite{Benzi2015} it has been shown that the bounds in Theorem \ref{Th:decay_exp}
are rather sharp, leading to correspondingly sharp bounds \eqref{bound_tau1}--\eqref{bound_tau2}.
 This allows us to save computational costs by replacing (\ref{tau_condition}) with
$$ 
\tau:=\mbox{argmin}\,\{t\geq 0\mbox{ s.t. }10\,e^{-\frac{\bar\xi^2}{5\rho t}}e^{-t \lambda_{\min}(A)}\geq
\epsilon_\tau\}
 \approx \tau_{opt},
$$
and a direct computation shows that 
\begin{equation}\label{choice_tau}
\tau=\frac{1}{10\rho\lambda_{\min}(A)}\left(-5\rho\log(\epsilon_\tau/10)-\sqrt{25\rho^2\log^2(\epsilon_\tau/10)-
20\rho\lambda_{\min}(A)\bar\xi^2}\right). 
\end{equation}
To clarify the discussion, let us consider the vector-valued function $f:\mathbb{R}\rightarrow \mathbb{R}^n$,
$f_i(t):=10\,e^{-\frac{\xi_i^2}{5\rho t}}e^{-t \lambda_{\min}(A)}$, $\xi_i=\lceil |i-1|/\beta_A\rceil$,
$i=1,\ldots,n$. Choosing $\tau$ as in \eqref{choice_tau} ensures that  
$f_{\bar\xi+1}(\tau)\geq\epsilon_\tau$ whereas $f_{\bar\xi+1+k}(\tau)<\epsilon_\tau$, $k>0$, so that also
$|(e^{-\tau A})_{\bar\xi+1+k,1}|<\epsilon_\tau$.
A graphical description is provided in the following Example \ref{tau_ex}.

 \begin{figure}[htb]
\begin{center}
  \includegraphics[scale=0.6]{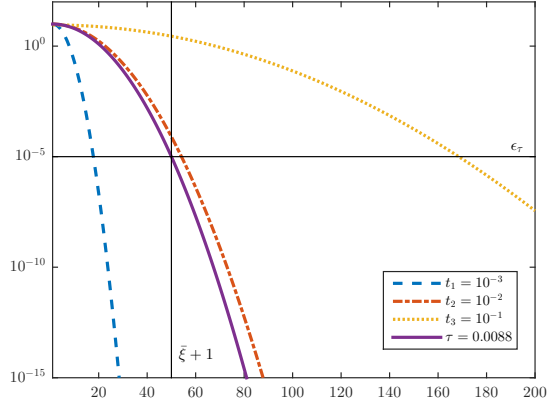}%
\caption{$f(t)$ for different values of $t$ and $n=200$.}\label{Fig.tau}
\end{center}
\end{figure}

\begin{table}[htb]
 \centering
 \begin{tabular}{r|rr}
  & $t=t_1$ & $t=\tau$ \\
  \hline
  $f_{\bar\xi}(t)$ & $1.27\cdot10^{-4}$ & $1.74\cdot10^{-5}$ \\
  $f_{\bar\xi+1}(t)$ & $7.95\cdot10^{-5}$ & $1\cdot10^{-5}$ \\
  $f_{\bar\xi+2}(t)$ & $4.90\cdot10^{-5}$ & $5.66\cdot10^{-6}$ \\
 \end{tabular}\caption{Example \ref{tau_ex}. Values of $f_{\bar\xi+k}(t)$, $k=0,1,2$, $t=t_1,\tau$.}\label{Tab.tau}
\end{table}

\begin{example}\label{tau_ex}
{\rm
 Consider $A=L/\lambda_{\min}(L)$ where 
 $L=\mbox{tridiag}(-1,\underline{2},-1)\in\mathbb{R}^{n\times n}$, $n=200$.
  Figure~\ref{Fig.tau} displays the function 
 $f$ for different values of $t$ and for $\tau$ computed by \eqref{choice_tau} where $\epsilon_\tau=10^{-5}$ and 
 $\beta_{\max}=50$. The range of the $y$-axis is restricted to
$[10^{-15},10^{2}]$ so as  to better appreciate the trend of the 
 largest entries of $f(t)$.
%
 Since $\beta_{max}=50$ and $\beta_A=1$, it holds that $\bar\xi=49$. For $t=t_1$, 
 $f_{\bar\xi+1}(t_1)=1.11\cdot10^{-50}<\epsilon_\tau$
 so that $t_1$ is not a useful value for our purpose.
 On the other hand, for $t=t_3$, $f_{\bar\xi+1}(t_3)=2.79\geq\epsilon_\tau$ but also many of the subsequent
 values satisfy
 $f_{\bar\xi+1+k}(t_3)\geq\epsilon_\tau$. This may lead to an undesired large bandwidth 
 when the rational approximation to $e^{-t_3A}$ is actually computed.
 We obtain a similar behavior for $f(t)$ when $t=t_2,\tau$, but only for $t=\tau$ we have   
 that $f_{\bar\xi+1}(\tau)\geq\epsilon_\tau$, whereas it holds that $f_{\bar\xi+1+k}(\tau)<\epsilon_\tau$,
 as illustrated in Table~\ref{Tab.tau}.
}
\end{example}

The overall procedure is summarized in the following algorithm.
 
\begin{algorithm}
\setcounter{AlgoLine}{0}
\NoCaptionOfAlgo
\caption{{\sc lyap\_banded}: Numerical approximation $X\approx X_B+S_m S_m^T$.}
\SetKwInOut{Input}{Input}\SetKwInOut{Output}{Output}
\Input{$A\in\mathbb{R}^{n\times n},$ $A$ SPD, $D\in\mathbb{R}^{n\times n}$, 
$\beta_{\max},\nu, m_{\max}\in\mathbb{N}$, $\epsilon_{\tau}$, $\epsilon_{B}$, $\epsilon_{quad}$,
$\epsilon_{Res}$}
\Output{$X_B\in\mathbb{R}^{n\times n}$, $S_m\in\mathbb{R}^{n\times s}$, $s\ll n$}
\BlankLine
\nl Compute $\tau$ by \eqref{choice_tau} \\
\nl Compute $X_B$ by Algorithm \ref{X_B_alg} \\
\nl Compute $S_m$ by Algorithm \ref{iterativeZ} \\
\end{algorithm}
 
Notice that approximations to the
 extreme eigenvalues of $A$ are necessary to be able to
 compute $\tau$ via \eqref{choice_tau}. In all our numerical examples, approximations to 
$\lambda_{\min}(A)$ and $\lambda_{\max}(A)$ were obtained by means of the Matlab function {\tt eigs}.

 Finally, since the strategy adopted for choosing $\tau$ is related to the computation of the banded part
of the solution, we suggest to set $\epsilon_\tau=\epsilon_{quad}$.
\section{Numerical solution of the Sylvester equation}\label{Sylvester_eqs}
The procedure proposed in the previous sections can be extended to the case of the
following {\it Sylvester} equation,
\begin{equation}\label{main_Sylv}
 AX+XB=D,
\end{equation}
with $A\in\mathbb{R}^{n_A\times n_A}$, $B\in\mathbb{R}^{n_B\times n_B}$ banded and SPD, and $D\in\mathbb{R}^{n_A\times n_B}$
banded. For ease of presentation we consider the case $n=n_A=n_B$, while different $n_A,n_B$ could be considered as well.
Once again, the selection of which numerical procedure should be used between those discussed in
the previous sections depends
on $\kappa({\cal A})$, where here $\mathcal{A}=B\otimes I+I\otimes A$. In this case,
$\kappa(\mathcal{A})=(\lambda_{\max}(A)+\lambda_{\max}(B))/(\lambda_{\min}(A)+\lambda_{\min}(B))$,
therefore the magnitude of $\kappa(\mathcal{A})$ depends on the relative size of
the extreme eigenvalues of $A$ and $B$.

If $\mathcal{A}$ is well-conditioned, Algorithm \ref{CG_matrix} can be applied with 
straightforward modifications in lines 1 and 2. Notice that, even if $D$ is 
symmetric, none of the {\sc cg} iterates is symmetric so that the memory-saving strategies and computational
tricks discussed in section~\ref{Well-conditioned} 
cannot be applied. Nevertheless, the bandwidth of the iterates still 
grows linearly with the number of iterations. 

 \begin{proposition}
 If $X_0=0$,  
  all the iterates generated by {\sc cg} applied to equation \eqref{main_Sylv} are banded matrices and, in particular,
  $$\begin{array}{ll}
   \beta_{W_k}\leq k\max(\beta_A,\beta_B)+\beta_{D},&  \beta_{X_k}\leq (k-1)\max(\beta_A,\beta_B)+\beta_{D},\\
   &\\
   \beta_{R_k}\leq k\max(\beta_A,\beta_B)+\beta_{D},&   \beta_{P_k}\leq k\max(\beta_A,\beta_B)+\beta_{D}.\\
  \end{array}$$
 \end{proposition}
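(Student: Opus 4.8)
The plan is to mimic the proof of Proposition~\ref{THM1} almost verbatim, the only genuine change being the analysis of line~2, which in the Sylvester setting reads $W_k = AP_{k-1} + P_{k-1}B$. For brevity I set $\mu := \max(\beta_A,\beta_B)$ and recall the two elementary facts used in the Lyapunov case: the product of two banded matrices with bandwidths $\beta_G,\beta_H$ has bandwidth at most $\beta_G+\beta_H$, and the sum of two banded matrices has bandwidth equal to the maximum of the two summand bandwidths.

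First I would record how each line of the (modified) algorithm affects the bandwidth. Line~2 is the only bandwidth-increasing step: since $AP_{k-1}$ has bandwidth at most $\beta_A+\beta_{P_{k-1}}$ and $P_{k-1}B$ has bandwidth at most $\beta_{P_{k-1}}+\beta_B$, their sum satisfies $\beta_{W_k} \le \beta_{P_{k-1}}+\max(\beta_A,\beta_B)=\beta_{P_{k-1}}+\mu$. The remaining lines merely form linear combinations of previously computed iterates, so that $\beta_{X_k}\le\max(\beta_{X_{k-1}},\beta_{P_{k-1}})$, $\beta_{R_k}\le\max(\beta_{R_{k-1}},\beta_{W_k})$, and $\beta_{P_k}\le\max(\beta_{R_k},\beta_{P_{k-1}})$, exactly as in the symmetric case.

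The argument then proceeds by induction on $k$. For the base case, $X_0=0$ forces $R_0=D$ and $P_0=R_0$, whence $\beta_{R_0}=\beta_{P_0}=\beta_D$; the bounds for $W_1,X_1,R_1,P_1$ follow directly and agree with the claimed estimates at $k=1$. For the inductive step I would assume the four bounds at $k=j-1$ and substitute $\beta_{P_{j-1}}\le(j-1)\mu+\beta_D$ into the recurrences above. This yields $\beta_{W_j}\le\mu+\beta_{P_{j-1}}\le j\mu+\beta_D$, and since $\beta_{X_{j-1}}\le(j-2)\mu+\beta_D\le\beta_{P_{j-1}}$ and $\beta_{R_{j-1}}\le(j-1)\mu+\beta_D\le\beta_{W_j}$, the maxima collapse to give $\beta_{X_j}\le(j-1)\mu+\beta_D$, $\beta_{R_j}\le j\mu+\beta_D$, and $\beta_{P_j}\le j\mu+\beta_D$, completing the induction.

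Because this is a direct transcription of the proof of Proposition~\ref{THM1}, I do not anticipate any real obstacle. The only point requiring a moment's care is the bandwidth of $W_k$: since $A$ now multiplies $P_{k-1}$ on the left while $B$ multiplies it on the right, one must take the maximum of the two separate increases rather than a single $\beta_A$, and this is precisely the source of the $\max(\beta_A,\beta_B)$ appearing throughout the statement.
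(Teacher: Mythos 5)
Your proof is correct and follows exactly the paper's own route: the paper likewise reduces to the argument of Proposition~\ref{THM1}, with the single observation that $\beta_{W_k}\leq \max(\beta_A,\beta_B)+\beta_{P_{k-1}}$ since $A$ acts on the left and $B$ on the right. The only cosmetic point is that the sum of two banded matrices has bandwidth \emph{at most} (not necessarily equal to) the maximum of the two bandwidths, but you use it only as an upper bound, so nothing is affected.
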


 \begin{proof}
  The same arguments of the proof of Theorem \ref{THM1} can be applied noticing that the bandwidth of the 
matrix $W_k=AP_k+P_kB$ is such that $\beta_{W_k}\leq\max(\beta_A,\beta_B)+\beta_{P_k}$.
 \end{proof}

 If $\cal A$ is ill conditioned, Algorithm  {\sc lyap\_banded} can be generalized to handle the new setting.
 The solution $X$ can be written as (see, e.g., \cite{Simoncini2016})
 \begin{equation}\label{SolSylv}
 X=\int_0^{+\infty}e^{-tA}De^{-tB}dt=\int_0^\tau e^{-tA}De^{-tB}dt+\int_\tau^{+\infty}e^{-tA}De^{-tB}dt.  
 \end{equation}
 A procedure similar to Algorithm \ref{X_B_alg} can be applied to approximate the first integral.
 Clearly, the presence of two different matrix exponentials increases the computational cost of the method as 
 two approximations $\widehat R_\nu(t_iA)$, $\widehat R_\nu(t_iB)$ have to be computed at each node. 
 
To approximate the second integral addend in \eqref{SolSylv} we can generalize Algorithm~\ref{iterativeZ}.
Taking into account the presence of two coefficient matrices, 
a left and a right space need to be constructed, namely
 $\mathbf{K}_m(A^{-1},v)$, $\mathbf{K}_m(B^{-1},w)$, as it is customary in projection methods for 
 Sylvester equations. 
 
 The choice of $\tau$ may be less straightforward in case of \eqref{main_Sylv}. If $A$ and $B$ have similar condition numbers,
 we suggest to still compute $\tau$ by 
 \eqref{choice_tau} but replacing $\lambda_{\min}(A)$ by $\lambda_{\min}(C)$, where $C$ is the matrix with the 
 widest bandwidth\footnote{Also the computation of $\rho$ in \eqref{choice_tau} will change accordingly.}
 between $A$ and $B$.
 

\section{Numerical examples}\label{Numerical_Examples}
In this section we present numerical experiments illustrating the effectiveness of 
the method {\sc lyap\_banded}.

 Banded matrices are a particular example of
$\mathcal{H}$-matrices, so that algorithms specifically
  designed to deal with this kind of structure could be employed in solving 
equation~\eqref{main.eq}.  The very low memory requirements 
is one of the features of the $\mathcal{H}$-format. Although we are not going to
implement an ad-hoc routine for $\mathcal{H}$-matrices computations, in Example \ref{Ex.4} we compare the 
memory requirements to store the pair $(X_B,S_m)$
  with those requested to store a comparably accurate approximate solution obtained
in $\mathcal{H}$-format. To this end,
  we use the {\tt hm-toolbox}\footnote{{\tt https://github.com/numpi/hm-toolbox}} developed 
while writing \cite{Massei2017}; to the best of our knowledge, this is the only available 
Matlab toolbox for $\mathcal{H}$-format computation.
 In particular, in the {\tt hm-toolbox} a  
subclass of the set of $\mathcal{H}$-format representations - sometimes called Hierarchically Off-Diagonal
Low-Rank (HODLR) format - is implemented; see, e.g., \cite[Chapter 3]{massei2017exploiting} for more details.

All results were obtained with Matlab R2015a on a Dell machine with two 2GHz processors and 128 GB of RAM.
All reported experiments use the parameter settings in Table~\ref{tab:params}.

{\small
\begin{center}
\begin{table}[htb]
\begin{tabular}{|l|l|}
\hline
$\epsilon_{res}=$  $10^{-3}$ & relative residual stopping tol ({\sc cg}, {\sc lyap\_banded}) \\
$m_{\max}=  2000$       & max number of iterations ({\sc cg}, {\sc lyap\_banded}) \\
$(\epsilon_\tau, \beta_{\max})= (10^{-5}, 500)$ & setting for the computation of $\tau$ in {\sc lyap\_banded} \\
$(\nu, \epsilon_B, \epsilon_{quad})=  (6, 10^{-5}, 10^{-5})$ & truncation and approximation parameters for $X_B$\\
\hline
\end{tabular}
\caption{\label{tab:params}}
\end{table}
\end{center}
}


\begin{num_example}\label{Ex.3}
{\rm
We consider the symmetric
tridiagonal matrix $A\in\mathbb{R}^{n\times n}$ (thus $\beta_A=1$) stemming from the
 discretization by centered finite differences of the 1D differential operator
$$
\mathcal{L}u=-\frac{1}{\gamma}\left(e^{x}u_{x}\right)_x+\gamma u,
$$
   in $\Omega=(0,1)$ with zero Dirichlet boundary conditions. The matrix 
$A$ is asymptotically ill-conditioned due to the second 
   order term of the operator, and $\kappa(A)$
   grows with $n$. The parameter $\gamma\in\mathbb{R}$ is used to vary the condition number of $A$.   
 The right-hand side $D$ of \eqref{main.eq} is a diagonal matrix (thus $\beta_D=0$) with
   uniformly distributed random diagonal entries. 
We ran {\sc lyap\_banded} for different values of $n$ and $\kappa(A)$ and compare its performance
with that of Algorithm \ref{CG_matrix}. In {\sc lyap\_banded} the parameter $\tau$ is computed
with the parameters set in Table \ref{tab:params}.
The relative residual norm $\|R\|_F/\|D\|_F$ is computed every $d=10$ iterations.
Table \ref{IllEx} collects the results as $n$ and $\gamma$ vary. 
 \begin{table}[htp]
\centering
{\scriptsize
\begin{tabular}{rrr|rrrr|rrrrr}
$n$ & $\gamma$ & $\kappa(A)$ & \multicolumn{4}{c|}{{\sc cg} (Algorithm \ref{CG_matrix})} & 
\multicolumn{5}{c}{{\sc lyap\_banded}} \\
  &  &  &Its. & $\beta_X$  &  Time  &  Res.& $\tau$ & $\beta_{X_B}$ & $s$ & Time & Res. \\
  \hline 
\!\!\!\!   $4\,\cdot 10^4$ & 1000 &  6.61e3 &290 &289 &3.77e2 &9.87e-4 &  2.73  & 480  & 7 &1.44e3  &
3.88e-4  \\
        & 500 & 2.68e4 & 583 &582 &1.57e3 &9.92e-4 & 0.56 & 578   &340  &1.63e3  &9.86e-4  \\
        & 200 & 1.72e5 & 1475 &1474 &1.09e4 &9.99e-4 &0.08  &  594 & 366  & 1.66e3 & 9.57e-4  \\
  \hline
\!\!\!\!   $7\,\cdot 10^4$ & 1800 &  6.19e3 & 281 & 280&6.20e2 &9.82e-4 & 2.98  &466 & 7&2.46e3  &
3.22e-4   \\
         & 1000 & 2.02e4 & 507 & 506 &2.02e3 &9.89e-4 & 0.76 &571 &576 &3.38e3 &
         9.89e-4  \\
         & 400 & 1.29e5 & 1277  & 1276 & 1.41e4 & 9.98e-4 &0.11 &592  &632  & 3.79e3  &
         9.56e-4  \\
  \hline
\!\!\!\!   $10^5$ & 2500 &  6.53e3   & 288 &287 &9.11e2 & 9.94e-4 &2.77   &478  & 7  & 3.96e3  &
 3.44e-4   \\
          & 1500 & 1.82e4 & 481 & 480 & 2.56e3 &9.96e-4 &0.84 & 570 & 812 & 6.77e3  &9.73e-4  \\
          & 500 & 1.67e5 &1456  &1455 &2.65e4 &9.96e-4 & 0.08 &  594 & 892 & 7.15e3  & 9.87e-4  \\ 
 
  \end{tabular} 
  \caption{Example \ref{Ex.3}. Results for different values of $n$ and $\gamma$. $s={\rm rank}(S_m)$. 
Time is CPU time in seconds. \label{IllEx}}
}
  \end{table}

 Algorithm \ref{CG_matrix} is very effective up to $\kappa(A)\approx \mathcal{O}(10^4)$, while
for the same $\kappa(A)$ {\sc lyap\_banded} is rather expensive in terms of CPU time
  compared to {\sc cg}. The role of the two methods is reversed for
 $\kappa(A)=\mathcal{O}(10^5)$. In this case, {\sc cg} takes a lot of iterations to
meet the stopping criterion; the costs of {\sc lyap\_banded} grow far less dramatically,
making the method competitive, both
in terms of CPU time and storage demand. The bandwidth obtained by {\sc cg} is lower
than that  obtained by the banded portion in {\sc lyap\_banded} for the smaller
conditions numbers, while the situation is reversed for the largest value of $\kappa(A)$.
  
  Regarding {\sc lyap\_banded},
we notice that for fixed $n$ both $\beta_{X_B}$ and $\mbox{rank}(S_m)$ grow with $\kappa(A)$.
In particular, $\mbox{rank}(S_m)$ is consistently much lower for the first value of $\gamma$
than for the other ones. This can be explained by noticing the quite different value of $\tau$
taken as $\gamma$ varies. This dramatically influences the exponential $\exp(-2\tau)$, 
and thus
the expected error bound for the banded part of the approximation. For instance, for $n=4\,\cdot 10^4$
we obtain

\begin{center}
$\tau=2.73, \quad  \exp(-2 \tau)=4.3\,\cdot 10^{-3}$ \\
$\tau=0.56, \quad  \exp(-2 \tau)=3.2\,\cdot 10^{-1}$\\
$\tau=0.08, \quad \exp(-2 \tau)=8.5\,\cdot 10^{-1}$
\end{center}
\vskip 0.05in

Taking into account the error upper bound in (\ref{eqn:error}), we have
$\|X - X_B\| \le \|X-X(\tau)\| + \|X(\tau) - X_B\| \le e^{-2\tau}\|X\| + \|X(\tau) - X_B\|$.
Therefore, if $X_B$ is a good approximation to $X(\tau)$, the leading term in the bound is
$e^{-2\tau}\|X\|$.  For $\tau=2.73$, the small value of $e^{-2\tau}$ shows
that the banded part $X_B$ is
already a good approximation to the final solution, so that a very low rank approximate
solution is sufficient to finalize the procedure. This is not the case for the other
values of $\tau$.

For similar values of $\kappa(A)$, 
  only $\mbox{rank}(S_m)$ is affected by an increment in the problem size. This phenomenon is 
  associated with the strategy we adopt for choosing $\tau$. Indeed, a fixed value $\beta_{\max}$ is employed
  and $\tau$ is computed according to \eqref{choice_tau}; this way $\tau$ only depends on the 
(rescaled) extreme eigenvalues of $A$,
  whose magnitude is similar for comparable $\kappa(A)$.
  Since the $n$ eigenvalues of $A$ seem to spread quite evenly in the interval $[1,\kappa(A)]$ 
  the number $\bar\ell$ of eigenvectors required to get an equally accurate low-rank 
matrix $X_L$ in Corollary~\ref{Cor.IItau} increases with $n$.

 We next set $n=40000$, $\gamma=500$. All the other parameters are as before.
 We vary $\tau$ to study how its choice affects the performance of the algorithm. 
 Results are reported in Table \ref{IllEx2}.
The reference value of $\tau$ (first line in the table) is obtained
with the default values of the parameters, as in Table \ref{tab:params}, and with
the automatic procedure of section~\ref{tau}.
All the other values of $\tau$ are selected as $10^{j}$, $j=-2,\ldots, 1$.
 
 \begin{table}[htp]
\centering
\begin{tabular}{rrrrr}
  $\tau$ & $\beta_{X_B}$ & $\mbox{rank}(S_m)$ & Time  & Res. \\
  \hline
  0.56 & 578   &340  & 1.63e3  &9.86e-4 \\
  \hline
    0.01  & \,\,92 & 1894 & 4.69e3 & 1.13e-2 \\
    \,\,0.1 & 270   & 861 & 1.43e3 & 9.88e-4 \\
    \,\,\,\,\,1 & 718   & 270 &2.70e3 & 9.89e-4 \\
    \,\,\,10 & 874  & 213 & 5.28e3 & 1.50e-3 \\ 
  \end{tabular} 
  \caption{Example \ref{Ex.3} with $n=40000$ and $\gamma=500$.
Results for different values of $\tau$. \label{IllEx2}}
  \end{table}

As expected, a small $\tau$ leads to a very tight bandwidth of $X_B$ but a too large rank of $S_m$.
 On the other hand, a very large $\tau$ causes an increment in 
the bandwidth of $X_B$ while a very low-rank $S_m$ is 
 computed. Notice that a proper value of $\tau$ is essential also in terms of accuracy of the 
 numerical solution.
 Indeed, for $\tau=0.01$, Algorithm \ref{iterativeZ} stops because the 
maximum number of iterations $m_{\max}=2000$ is reached,
 while for $\tau=10$ a too small residual norm reduction causes a stagnation flag.  
Good performance is obtained for $\tau=0.1,1$,
 although both values lead to larger memory requirements 
than those obtained with $\tau$ computed by \eqref{choice_tau}.
}

\end{num_example}

\vskip 0.1in

\begin{num_example}\label{Ex.4}
{\rm
  We consider the matrix $A\in{\mathbb R}^{n\times n}$ stemming from the
 discretization by centered finite differences of the 1D differential operator
 $${\cal L}(u)=-u_{xx}+\gamma\log(10(x+1))u,$$
   in $\Omega=(0,1)$ with zero Dirichlet boundary conditions and $\gamma>0$. If $\Omega$ is discretized
   by $n$ nodes $(x_1,\ldots,x_n)$,
   we have
 $$
A=-\frac{(n-1)^2}{12}\mbox{pentadiag}(-1,16,-\underline{30},16,-1)+
\gamma\mbox{diag}(\chi_1, \ldots, \chi_n), \quad \chi_j=\log(10(x_j+1))
$$
 (four neighboring points were used for each grid node).
   As in the previous example, the matrix $A$ is asymptotically ill-conditioned and $\gamma$ is chosen to control its 
   condition number, so that $A=A(\gamma)$.
 The right-hand side $D$ of \eqref{main.eq} is a symmetric tridiagonal matrix with
   uniformly distributed random entries and unit Frobenius norm. Both $A$ and $D$ are banded, 
   with $\beta_A=2$ and $\beta_D=1$.

 \begin{table}[htb]
\centering
{\footnotesize
\begin{tabular}{rrrrrrrr}
 
$n$  & $\gamma$ & $\kappa(A)$ &  $\tau$ & Time $X_B$ ($\beta_{X_B}$) & Time $S_m$ ($s$) & Time Tot. & Res. \\
  \hline 
   $4\, 10^4$ & 5000 & 7.00e5 & 2.07e-2  & 2.45e3 (523)& 3.11e2 (464)  &
   2.77e3 & 9.89e-4  \\
              & 800  & 4.20e6 & 3.45e-3  & 2.28e3 (523) & 3.15e2 (464)  &
              2.60e3 & 9.98e-4  \\
              & 300  & 1.08e7 & 1.32e-3  & 2.24e3 (523) & 3.09e2 (464)
              & 2.55e3 & 9.97e-4 \\
  \hline 
   $7\, 10^4$ & 15000& 7.27e5 & 1.99e-2  & 4.01e3 (523) & 1.54e3 (795)  &
   5.55e3 & 9.86e-4  \\
              & 2000 & 5.27e6 & 2.78e-3  & 4.02e3 (523) & 1.54e3 (794)
              & 5.55e3 & 9.95e-4   \\
              & 800  & 1.28e7 & 1.16e-3  & 3.99e3 (523) & 1.54e3 (793)  & 
              5.53e3 & 9.92e-4  \\
   
   \hline
   $10^5$     & 50000& 4.51e5 & 3.22e-2  & 6.21e3 (523) & 4.47e3 (1124) & 
   1.07e4 & 9.86e-4  \\
              & 5000 & 4.38e6 & 3.34e-3  & 6.08e3 (523)& 4.47e3 (1124) &
              1.04e4 & 9.99e-4 \\
              & 200  & 6.78e7 & 2.13e-4  & 5.90e3 (523) & 4.44e3 (1129) & 
              1.03e4 & 9.77e-4 \\   
  \end{tabular} 
   \caption{Example \ref{Ex.4}. Results for different values of $n$ and $\gamma$. 
The timings reported are in seconds. $s={\rm rank}(S_m)$. \label{IllEx.2}}
  }
  \end{table}
  
  We solve this problem only by {\sc lyap\_banded} as the large $n$'s and the moderate values of $\gamma$ we 
  considered lead to sizeable values of $\kappa(A)$.
  All the thresholds and parameters of the procedure are set as in Table \ref{IllEx}.  
 In Table \ref{IllEx.2} we collect the results as $n$ and $\gamma$ vary.
 We also report the CPU time devoted 
 to the computation of $X_B$ and $S_m$ respectively.

  We notice that in this example, the fixed value $\beta_{\max}$ leads to a constant $\beta_{X_B}$ 
  for all the tested $n$'s. 
  Moreover, for a given $n$, also the rank of the computed 
$S_m$ turns out to be almost independent of $\kappa(A)$. 
This can be intuitively explained by referring to Figure~\ref{Fig.exp_eig}, where the values
of $\exp(-\tau \lambda_j)$ above $10^{-8}$ are plotted for three automatic selections
of $\tau$ - as the operator parameter $\gamma$ changes -  and for
the smallest eigenvalues of $A$. The legend also gives the {\em number} of values above
the threshold, for the given $\tau$. Both the distribution and the number
of eigenvalues of $A=A(\gamma)$ giving an exponential above the threshold $10^{-8}$ are 
approximately the same for all
selections of $\tau$, showing that the automatic selection of $\tau$ well adapts to
the change in the spectrum given by the different $\gamma$'s.

 \begin{figure}[htb]
\begin{center}
  \includegraphics[scale=0.6]{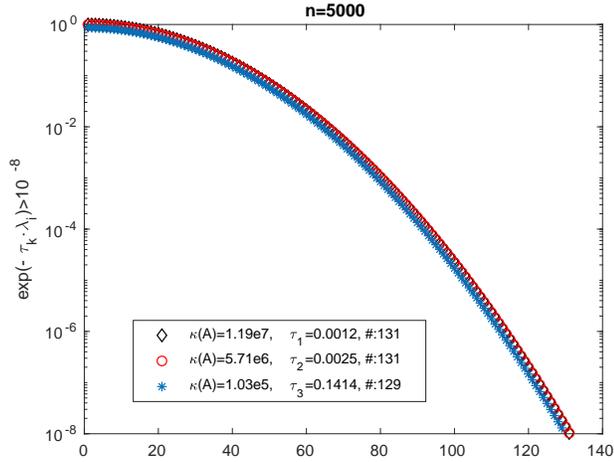}%
\caption{Values of $\exp(-\tau_k \lambda_j)$ above the threshold $10^{-8}$. Larger
eigenvalues of $A$ contribute very little to the value of the exponential.
\label{Fig.exp_eig}}
\end{center}
\end{figure}

 We next compare the storage demand of {\sc lyap\_banded} with that of
an $\mathcal{H}$-format approximation to the solution $X$. We consider a smaller 
problem, $n=5000$, so as to compute 
$X$ by the Bartels-Stewart algorithm (Matlab function {\tt lyap}). 
 The comparison matrix $\widebar X\approx X$ is obtained from $X$}
 by means of the function {\tt hm} available in the Matlab toolbox {\tt hm-toolbox}. 
The parameters for the 
$\mathcal{H}$-format compression are set so as to have a similar residual norm in $\widebar X$
and $(X_B,S_m)$: we set {\tt hmoption('threshold',1e-7)} for $\gamma=200,20$ 
and {\tt hmoption('threshold',1e-8)} for $\gamma=0.2$.
Table~\ref{IllEx.3} collects the results.
    
  \begin{table}[htb]
 \centering
 {\small
 \begin{tabular}{rr|ccc|cc}
   &  &  $X_B$  &  $S_m$ & $(X_B,S_m)$ & $\widebar X$ &  $\widebar X$ \\
  $\gamma$ & $\kappa(A)$ &  Bytes ($\beta_{X_B}$) & Bytes ($s$) & Res. & Bytes  & Res.  \\
   \hline 
      200 & 2.50e5 & 4.29e7 (275) & 4.68e6 (117) & 9.49e-4  & 1.11e7 & 1.48e-4  \\
      20  & 2.09e6 & 4.29e7 (275)  & 4.68e6 (117) & 9.73e-4  & 1.05e7 & 9.98e-4  \\
      0.2 & 1.28e7 & 4.29e7 (275)  & 4.68e6 (117) & 9.23e-4 & 1.09e7 & 6.33e-4 \\
   \end{tabular} 
      \caption{Example \ref{Ex.4}. Results for different values of $\gamma$. 
 $s={\rm rank}(S_m)$.  $\mathtt{\bar X= hm(X)}$. \label{IllEx.3}}
}
   \end{table}

For the same level of residual accuracy, the numbers in Table \ref{IllEx.3} show that
the memory requirements for $\widebar X$ are of
   the same order of magnitude as those for storing $(X_B,S_m)$, suggesting that the splitting procedure we propose
   works rather well in terms of memory demands.

\end{num_example}

\section{Conclusions}\label{Conclusions}
In this paper we have addressed the solution of large-scale Lyapunov equations with banded
symmetric data and positive definite coefficient matrix $A$.
In case of well-conditioned $A$, the numerical solution can be satisfactorily
approximated by a banded matrix, so that the matrix-oriented {\sc cg} method has been 
shown to be a valid candidate for its computation.

If the coefficient matrix is ill-conditioned, no banded good approximation can be 
determined in general.
However, we showed that the solution $X$ can be represented in terms
of the splitting $X_B+S_mS_m^T$, with $X_B$ banded and $S_m$ low-rank, 
and an efficient procedure for computing the pair
$(X_B,S_m)$ was presented. Our preliminary numerical results show that the new method
is able to compute a quite accurate approximate solution, and that the tuning of
the required parameters is not too troublesome.

Both the derivation and the algorithm were extended to the case of Sylvester
equations with banded symmetric data and positive definite coefficient matrices.


\section*{Appendix}
Here we report the algorithm presented in section~\ref{Implementation details} for solving the linear system~(\ref{LU}). 
\begin{algorithm}
\setcounter{AlgoLine}{0}
\caption{Computing a banded approximation to $(t_iA+\xi_jI)^{-1}$.}\label{sparseInv}
\SetKwInOut{Input}{input}\SetKwInOut{Output}{output}

\Input{$A\in\mathbb{R}^{n\times n},$ $A$ SPD, 
$t_i\in\mathbb{R}$, $\xi_j\in\mathbb{C}$ }
\Output{$\reallywidehat{(t_iA+\xi_jI)^{-1}}$, $\reallywidehat{(t_iA+\xi_jI)^{-1}}\approx (t_iA+\xi_jI)^{-1}$}
\BlankLine
\nl Compute $t_iA-\xi_jI=L(t_i,\xi_j)D(t_i,\xi_j)L(t_i,\xi_j)^T$ \\
\For{$q = 1,\dots,n$ }{
\nl Compute $\bar p_q\,(t_i,\xi_j)$ as in \eqref{bar_p} \\
\nl Set $\widehat p_q\,(t_i,\xi_j):= \min\{n,q+\bar p_q\,(t_i,\xi_j)\}$ \\
\nl $(y_q)_1=1/\left(L(t_i,\xi_j)\right)_{q,q}$ \\ 
\For{$k=q+1, \ldots, \widehat p_q\,(t_i,\xi_j)$}{
\nl $(y_q)_{k-q+1}=-\left(L(t_i,\xi_j)\right)^T_{k,q:k}(y_q)_{1:k-q}/\left(L(t_i,\xi_j)\right)_{k,k}$
}
\For{$k=q, \ldots, \widehat p_q\,(t_i,\xi_j)$}{
\nl $(z_q)_{k-q+1}=(y_q)_{k-q+1}/\left(D(t_i,\xi_j)\right)_{k,k}$
}
\nl $(s_q)_{\widehat p_q\,(t_i,\xi_j)-q+1}=(z_q)_{\widehat p_q\,(t_i,\xi_j)-q+1}/
\left(L(t_i,\xi_j)^T\right)_{\widehat p_q\,(t_i,\xi_j),\widehat p_q\,(t_i,\xi_j)}$\\
\For{$k=\widehat p_q\,(t_i,\xi_j)-1, \ldots, q$}{%
\nl \!\!\!{\small $(s_q)_{k-q+1}=\left((z_q)_{j-q+1}-\left(L(t_i,\xi_j)^T\right)^T_{k,k:\widehat p_q\,(t_i,\xi_j)}
(s_q)_{j-q+2:\widehat p_q\,(t_i,\xi_j)-q+1}\right)/
\left(L(t_i,\xi_j)^T\right)_{k,k}$ }\\
}
}
\nl Set $\mathfrak{S}=[s_1,\ldots,s_n]$ and $\mathfrak{s}:=\mbox{diag}(\mathfrak{S})$ \\
\nl Set $\reallywidehat{(t_iA+\xi_jI)^{-1}}:=\mathfrak{S}+\mathfrak{S}^T-\mbox{diag}(\mathfrak{s})$\\

\end{algorithm}

\section{Acknowledgment}
 We thank A. Haber for having provided us with the codes from \cite{Haber2016}.
 We thank the reviewers for their careful reading and several insightful remarks.

 Both authors are members of the Italian INdAM Research group GNCS.
\bibliography{sparseRHS}

\end{document}